\newcommand{\p}{\mathfrak{P}}
\newcommand{\into}{\hookrightarrow}
\newcommand{\abs}[1]{\left\lvert#1\right\rvert}
\newcommand{\norm}[1]{\left\lVert#1\right\rVert}
\newcommand{\Z}{\ensuremath{\mathbb{Z}}}
\newcommand{\R}{\ensuremath{\mathbb{R}}}
\newcommand{\C}{\ensuremath{\mathbb{C}}}
\newcommand{\Q}{\ensuremath{\mathbb{Q}}}
\newcommand{\M}{\mathcal{M}}
\newcommand{\isom}{\cong} 
\renewcommand{\P}{\ensuremath{\mathbb{P}}}
\renewcommand{\H}{\ensuremath{\mathcal{H}}}
\renewcommand{\bar}[1]{\overline{#1}}
\DeclareMathOperator{\Flags}{Flags}
\DeclareMathOperator{\fl}{fl}
\DeclareMathOperator{\Span}{Span}
\DeclareMathOperator{\rank}{Rank}
\DeclareMathOperator{\Rank}{Rank}
\DeclareMathOperator{\Sym}{Sym}
\DeclareMathOperator{\PS}{PS}
\newcommand{\cH}{\mathcal{H}}
\newcommand{\cJ}{\mathcal{J}}
\newcommand{\cM}{\mathcal{M}}
\newcommand{\bA}{\mathbf{A}}
\newcommand{\bB}{\mathbf{B}}
\newcommand{\bP}{\mathbf{P}}
\theoremstyle{plain}
\newtheorem{theorem}{Theorem}
\numberwithin{theorem}{section}
\newtheorem{thm}[theorem]{Theorem}
\newtheorem{prop}[theorem]{Proposition}
\newtheorem{cor}[theorem]{Corollary}
\newtheorem{lem}[theorem]{Lemma}
\newtheorem{crit}[theorem]{Criteria}
\theoremstyle{definition}
\newtheorem{Definition/Theorem}[theorem]{Definition/Theorem}
\newtheorem{Definition/Proposition}[theorem]{Definition/Proposition}
\newtheorem{Def}[theorem]{Definition}
\newtheorem{Defthm}[theorem]{Definition-Theorem}
\newtheorem{ex}[theorem]{Example}
\newtheorem{Corollary/Definition}[theorem]{Corollary/Definition}
\theoremstyle{remark}
\newtheorem{rem}[theorem]{Remark}
\renewcommand{\H}{\cH}
\renewcommand{\M}{\cM}
\newcommand{\Hbar}{\bar{\cH}}
\newcommand{\Mbar}{\bar{\cM}}
\newcommand{\full}{\mathrm{full}}
\newcommand{\st}{\mathrm{st}}
\renewcommand{\rm}{\mathrm{rm}}
\renewcommand{\setminus}{\smallsetminus}
\renewcommand{\vert}{\mathrm{vert}}
\newcommand{\be}{\boldsymbol{\epsilon}}
\newcommand{\rat}{
\tikz[minimum height=2ex]
  \path[dashed,->]
   node (a)            {}
   node (b) at (1em,0) {}
  ($(a.center)+(0,-.07)$) edge ($(b.center)+(0,-.07)$);
}
\newcommand{\multi}{
\tikz[minimum height=2ex]
  \path[->]
   node (a)            {}
   node (b) at (1em,0) {}
  ($(a.center)+(0,.0)$) edge ($(b.center)+(0,.0)$)
  ($(a.center)+(0,-.14)$) edge ($(b.center)+(0,-.14)$);
}
\DeclareMathOperator{\md}{md}
\DeclareMathOperator{\br}{br}
\DeclareMathOperator{\Th}{Th}
\DeclareMathOperator{\mrk}{Mark}
\DeclareMathOperator{\Verts}{Verts}
\DeclareMathOperator{\Edges}{Edges}
\DeclareMathOperator{\mpt}{mp}
\newcommand{\CP}{\C\P}
\newcommand{\dms}{\abs{\bP}-3}
\newcommand{\hvy}{hvy}
\newcommand{\lt}{lt}
\begin{document}

\title{Algebraic stability of meromorphic maps descended from Thurston's pullback maps}
\author{Rohini Ramadas}
\email{rohini\_ramadas@brown.edu}
\address{Department of Mathematics\\Brown University\\Providence, RI}
\thanks{This work was partially supported by NSF grants
0943832, 1045119, 1068190, and 1703308.}
\subjclass[2010]{14H10 (primary), 37F10 (primary), 37F05} 

\begin{abstract}
 Let $\phi:S^2 \to S^2$ be an orientation-preserving branched covering whose post-critical set has finite cardinality $n$. If $\phi$ has a fully ramified periodic point $p_{\infty}$ and satisfies certain additional conditions, then, by work of Koch, $\phi$ induces a meromorphic self-map $R_{\phi}$ on the moduli space $\M_{0,n}$; $R_{\phi}$ descends from Thurston's pullback map on Teichm\"uller space. Here, we relate the dynamics of $R_{\phi}$ on $\M_{0,n}$ to the dynamics of $\phi$ on $S^2$. Let $\ell$ be the length of the periodic cycle in which the fully ramified point $p_{\infty}$ lies; we show that $R_{\phi}$ is algebraically stable on the heavy-light Hassett space corresponding to $\ell$ heavy marked points and $(n-\ell)$ light points.   
\end{abstract}
\maketitle

\section{Introduction}\label{sec:Intro}

Suppose that $\phi:S^2 \to S^2$ is an orientation preserving branched covering from a topological $2$-sphere to itself, of topological degree $d>1$. A \emph{critical point} of $\phi$ is a point at which $\phi$ is not a local homeomorphism. If $x$ is a critical point of $\phi$ then $x$ has a punctured neighborhood on which $\phi$ is an  $r$-to$1$ covering map, with $2\le r\le d$. In this case the \emph{multiplicity} of $x$ is $(r-1)$; the map $\phi$ has $(2d-2)$ critical points counted with multiplicity. Suppose further that the \emph{post-critical set} of $\phi$:
$$\bP:=\{\phi^n(x)|\text{ $x$ is a critical point of $\phi$ and $n>0$}\}$$
is finite. Then $\phi$ is called \emph{post-critically finite/ PCF}. Thurston \cite{DouadyHubbard1993} introduced a holomorphic \emph{pullback} map $\Th_{\phi}$ induced by $\phi$ on the Teichm\"uller space $\mathcal{T}(S^2,\bP)$ of complex structures on $(S^2,\bP)$; the branched covering $\phi$ is homotopic to a PCF rational function on $\C\P^1$ if and only if $\Th_{\phi}$ has a fixed point. 

Teichm\"uller space $\mathcal{T}(S^2,\bP)$ is a non-algebraic complex manifold but is the universal cover of the algebraic moduli space $\M_{0,\bP}$ of markings of $\C\P^1$ by the set $\bP$. Koch has introduced algebraic dynamical systems on $\M_{0,\bP}$ that descend from the transcendental Thurston pullback map. We say that a critical point $x$ of $\phi$ is \emph{fully ramified} if it has the maximum possible multiplicity of $(d-1)$, i.e. if the local degree of $\phi$ at $x$ equals the global degree of $\phi$. We say that $y\in S^2$ is a \emph{periodic} point of $\phi$ if $\exists \ell>0$ such that $\phi^{\ell}(y)=y$; if $\ell$ is chosen to be minimal we say $y$ is periodic of period $\ell$. If $\ell=1$, i.e. if $\phi(y)=y$, then $y$ is a fixed point of $\phi$. We say $\phi$ is a \emph{topological polynomial} if there is a point on $S^2$ that is fully ramified and fixed. Now, suppose $\phi:(S^2,\bP)\to(S^2,\bP)$ is PCF and satisfies:
\begin{crit}\label{crit:KC}
\begin{enumerate}
\item $\bP$ contains a periodic and fully ramified point $p_\infty$ of $\phi$, and  \label{item:KC1}
\item either every other critical point of $\phi$ is also periodic or there is exactly one other critical point of $\phi$, \label{item:KC2}
\end{enumerate}
\end{crit}
then \cite{Koch2013} the ``inverse" of $\Th_{\phi}$ descends to $\M_{0,\bP}$. More precisely, there is a meromorphic map $R_{\phi}:\M_{0,\bP}\rat\M_{0,\bP}$ such that the following diagram commutes: 

\begin{center}
\begin{tikzcd}[row sep=18pt, column sep=80pt]
  \mathcal{T}(S^2,\mathbf{P})\arrow[r,"\textstyle\mathrm{Th}_\phi"]\arrow[d,"\begin{tabular}{c}universal cover\end{tabular}",swap]&\mathcal{T}(S^2,\mathbf{P})\arrow[d,"\begin{tabular}{c}universal cover\end{tabular}"]\\
  \mathcal{M}_{0,\mathbf{P}}&\mathcal{M}_{0,\mathbf{P}}\arrow[l,dashed,"\textstyle R_\phi"]
\end{tikzcd}
\end{center}

The moduli space $\M_{0,\bP}$ is not compact. It is natural to ask whether $R_{\phi}$ extends to a holomorphic self-map of some compactification. Projective space $\C\P^{\abs{\bP}-3}$ is a compactification of $\M_{0,\bP}$. Koch also showed that if the fully ramified point $p_\infty$ in criterion (\ref{crit:KC}, \ref{item:KC1}) is a fixed point of $\phi$, i.e. if $\phi$ is a topological polynomial, then $R_{\phi}:\C\P^{\abs{\bP}-3}\to\C\P^{\abs{\bP}-3}$ is holomorphic. Moreover, in this case, the union of the forward orbits of the critical loci of $R_{\phi}$ is an algebraic set in $\C\P^{\abs{\bP}-3}$, i.e. $R_{\phi}$ is a higher-dimensional analog of a post-critically finite map. 

In general, if $\phi$ is not a topological polynomial, we ask whether $R_{\phi}$ extends ``nicely" to some compactification of $\M_{0,\bP}$. It might be too much to expect that $R_{\phi}$ extends to a holomorphic self-map of some compactification. Instead, we study a weaker property called \emph{algebraic stability}. Let $X$ be some smooth projective compactification of $\M_{0,\bP}$, so $R_{\phi}$ can be considered to be a meromorphic self-map of $X$. Although $R_{\phi}:X\rat X$ may not extend to a holomorphic or even continuous map from $X$ to itself, it induces pullback actions $R_{\phi}^*$ on the singular cohomology groups of $X$ (see Section \ref{sec:Hurwitzcohomology} for details of how this action is defined). This action preserves the Hodge decomposition and therefore induces a pullback action on the groups $H^{k,k}(X)$. However, crucially, this action does not respect iteration, i.e. in general we do not have $(R_{\phi}^n)^*=(R_{\phi}^*)^n$. Suppose we do have, for some fixed $k$ and all $n>0$ that $(R_{\phi}^n)^*=(R_{\phi}^*)^n$ on $H^{k,k}(X)$; in this case we say that $R_{\phi}$ is \emph{$k$-stable} on $X$. We say $R_{\phi}$ is \emph{algebraically stable} on $X$ if it is $k$-stable on $X$ for all $k$. If $R_{\phi}$ extends to a holomorphic self-map of $X$ then it is automatically algebraically stable on $X$, so being algebraically stable may be thought of as `acting on cohomology like a holomorphic map does'.  

Koch and Roeder \cite{KochRoeder2015} showed that if $\phi$ has exactly two critical points, both periodic, then $R_{\phi}$ is algebraically stable on the Deligne-Mumford compactification of $\M_{0,\bP}$. This was generalized by Koch, Speyer and the author \cite{Ramadas2015}: If $\phi$ is PCF and $R_{\phi}$ exists, then $R_{\phi}$ is algebraically stable on the Deligne-Mumford compactification. The Deligne-Mumford compactification $\Mbar_{0,\bP}$ of $\M_{0,\bP}$ is ``large" as measured by the ranks of its cohomology groups and the number of irreducible components of $\Mbar_{0,\bP}\setminus\M_{0,\bP}$. On the other hand, by \cite{Koch2013}, if $\phi$ satisfies criteria (\ref{crit:KC}, \ref{item:KC1}) and (\ref{crit:KC}, \ref{item:KC2}) and is a topological polynomial, then $R_{\phi}$ is holomorphic, thus algebraically stable, on the much smaller compactification $\C\P^{\abs{\bP}-3}$. 

In this paper, we interpolate between \cite{Koch2013} and \cite{Ramadas2015} by identifying a relationship between the topological dynamics of $\phi$ and the algebraic dynamics of $R_{\phi}$. We find a sequence $\{X_{\ell}\}_{\ell=1,\ldots,\abs{\bP}}$ of of smooth projective compactifications of $\M_{0,\bP}$, with $X_1=\C\P^{\abs{\bP}-3}$ and $X_{\abs{\bP}}=\Mbar_{0,\bP}$, such that for all $\ell$, there is a birational holomorphic map $\rho_{\ell+1,\ell}: X_{\ell+1}\to X_{\ell}$. We show:

\begin{thm}\label{thm:Main}
If $\phi$ is a branched covering with post-critical set $\bP$ satisfying criteria (\ref{crit:KC}, \ref{item:KC1}) and (\ref{crit:KC}, \ref{item:KC2}) and such that the fully ramified point $p_\infty$ of (\ref{crit:KC}, \ref{item:KC1}) is in a cycle of period $\ell$, then the meromorphic map $R_{\phi}:\M_{0,\bP}\rat\M_{0,\bP}$ is algebraically stable on $X_{\ell}$.
\end{thm}

The $\ell$-th compactification $X_{\ell}$ is the \emph{heavy/light Hassett space} corresponding to $\ell$ heavy weights and $(\abs{\bP}-\ell)$ light weights, constructed by Hassett and parametrizing \emph{weighted stable curves} (\cite{Hassett2003}, see Sections \ref{sec:CombinatorialCompactifications} and \ref{sec:WeightedStableCurves} for details). The space $X_\ell$ can be obtained as an iterated blow-up of $\C\P^{\abs{P}-3}$. The last three compactifications, $X_{\abs{\bP}-2}$, $X_{\abs{\bP}-1}$ and $X_{\abs{\bP}}$, are isomorphic to each other, but for $\ell\le(\abs{P}-3)$, the birational map $\rho_{\ell+1,\ell}: X_{\ell+1}\to X_{\ell}$ contracts in dimension certain subvarieties in the boundary $X_{\ell+1}\setminus\M_{0,\bP}$. Under the pushforward map $(\rho_{\ell+1,\ell})_*$ on homology, the classes of the contracted subvarieties go to zero. Thus for $\ell=1,\ldots,(\abs{P}-2)$, the spaces $X_{\ell}$ are all distinct. For small $\ell$, the compactification $X_{\ell}$ is ``small", as measured by the number of components in its boundary $X_{\ell}\setminus\M_{0,\bP}$ and the ranks of its cohomology groups. If $\phi$ is a branched covering with a fully ramified point $p_\infty$ in a periodic cycle (i.e. satisfying criterion (\ref{crit:KC}, \ref{item:KC1})), then the length $\ell$ of that cycle measures how much $\phi$ ``resembles" a topological polynomial: If $\ell=1$ then $\phi$ is a topological polynomial; if $\ell>1$ is small then $\phi$ resembles a topological polynomial. We give a non-rigorous interpretation for Theorem \ref{thm:Main}:

\medskip

\noindent\textit{If $\phi$ resembles a topological polynomial, then $R_{\phi}$ is algebraically stable on a small compactification of $\M_{0,\bP}$.}

\subsection{Dynamical degrees and the significance of algebraic stability.} Let $g:U\rat U$ be a meromorphic self-map of a smooth quasiprojective complex variety, and let $X$ be some smooth projective compactification of $U$. As discussed above and described in Section \ref{sec:Hurwitzcohomology}, $g$ induces a pullback action on $H^{k,k}(X)$, but we may not have $(g^n)^*=(g^*)^n$. However, we obtain an important numerical invariant of $g$ by considering the asymptotics of the operators $(g^n)^*$. Pick any norm on $H^{k,k}(X)$. The \emph{$k$-th dynamical degree} of $g$ is the non-negative real number 
\begin{align*}
  \lim_{n\to\infty}\norm{(g^n)^*:H^{k,k}(X)\to H^{k,k}(X)}^{1/n}.
\end{align*}
This limit exists, is independent of the choice of norm, and also of the choice of compactification $X$ (Dinh and Sibony \cite{DinhSibony2005} in the complex setting and Truong \cite{Truong2015} in the algebraic setting). Thus the $k$-th dynamical degree is intrinsic to the action of $g$ on the possibly non-compact space $U$. The dynamical degrees of a map measure its complexity: The topological entropy of a holomorphic map is equal to the logarithm of its largest dynamical degree (Gromov \cite{Gromov2003} and Yomdin \cite{Yomdin1987}) and the topological entropy of a meromorphic map is at most the logarithm of its largest dynamical degree (Dinh and Sibony \cite{DinhSibony2005}).

Given $g:U\rat U$, if there exists a compactification $X$ of $U$ on which $g$ is $k$-stable, then the $k$-th dynamical degree of $g$ is the absolute value of the largest eigenvalue of $g^*$ acting on $H^{k,k}(X)$, thus an algebraic integer whose degree over $\Q$ is at most $\Rank(H^{k,k}(X))$. The degree over $\Q$ of an algebraic integer is a measure of its complexity. Thus if $g$ is $k$-stable on $X$, then $\Rank(H^{k,k}(X))$ gives an upper bound on a certain type of complexity of the map $g$.

A common strategy to compute the dynamical degrees of a given map is to look for birational models on which the map is $k$-stable/algebraically stable. However, Favre \cite{Favre2003} has given examples of monomial maps $g:\P^2\rat \P^2$ for which no such birational models exist. Computing the $k$-th dynamical degree of a map which is either provably not $k$-stable or not known to be $k$-stable on any model involves dealing with the pullbacks along infinitely many iterates, and is difficult to impossible. Also, given a meromorphic map, there is no known strategy to find a birational model on which it is $k$-stable. Thus there are only a few examples of meromorphic maps whose dynamical degrees have been computed. 

In this regard, monomial maps are perhaps the best understood. A monomial map $g:(\C^*)^n\to(\C^*)^n$ is determined by an $n$-by-$n$ integer matrix $M_g$ of exponents. Work of Jonsson and Wulcan \cite{JonssonWulcan2011} for $k=1$ and Lin \cite{LinStability} in general gives criteria on $M_g$ for the existence of a compactification on which $g$ is $k$-stable. When those criteria are satisfied, they give explicit descriptions of toric compactifications on which the maps are $1$-stable (Jonsson-Wulcan)/algebraically stable (Lin). These works also lead to formulas for the dynamical degrees of monomial maps: the $k$-th dynamical degree of $g$ is the absolute value of the product of the $k$ largest eigenvalues of the integer matrix $M_g$, thus an algebraic integer of degree at most $\binom{n}{k}$ \cite{Lin2012}. In addition to monomial maps, birational surface transformations are also well-studied: Diller and Favre (\cite{DillerFavre2001}) showed that every birational transformation $g$ of a projective surface $X$ is $1$-stable on some birational model of $X$. They use this result to show that the first dynamical degree of $g$ is either $1$, a Salem number, or a Pisot number. Blanc and Cantat (\cite{BlancCantat2016}) describe the set of Salem and Pisot numbers that arise as dynamical degrees of birational surface transformations. Given the difficulty in computing dynamical degrees, there are several open questions about them. Until recently, it was not known whether every dynamical degree is an algebraic integer: Bell-Diller-Jonsson \cite{BellDillerJonsson2019} have recently found a map with a transcendental dynamical degree. 

It had already been established in \cite{Ramadas2015} that every $R_\phi$ is algebraically stable on the Deligne-Mumford compactifcation, and thus has all dynamical degrees are algebraic integers. Theorem \ref{thm:Main} offers a more refined view, by relating a type of complexity of $R_\phi$ (the length of the periodic cycle of $p_\infty$) to a type of complexity of $R_\phi$ (the degree over $\Q$ of its $k$-th dynamical degree). As a corollary to Theorem \ref{thm:Main}, we obtain:

\begin{cor} 
If $\phi$ is a branched covering with finite post-critical set $\bP$ satisfying criteria (\ref{crit:KC}, \ref{item:KC1}) and (\ref{crit:KC}, \ref{item:KC2}) and such that the fully ramified point $p_\infty$ of (\ref{crit:KC}, \ref{item:KC1}) is in a cycle of period $\ell$, then the $k$th dynamical degree of $R_{\phi}$ is an algebraic integer whose degree over $\Q$ is at most $\rank(H^{k,k}(X_{\ell}))$. 
\end{cor}

The isomorphism class of $X_{\ell}$ depends on $\ell$ and $\abs{\bP}$. For fixed $\abs{\bP}$, fixed $k\in\{1,\ldots,(\dim_{\C}(\M_{0,\bP})-1)\}$ and for $\ell_1,\ell_2\in\{1,\ldots,(\abs{\bP}-2)\}$, if $\ell_1<\ell_2$ then $\rank(H^{k,k}(X_{\ell_1}))<\rank(H^{k,k}(X_{\ell_2}))$. Thus, if we fix $N>0$ the cardinality of post-critical set, and consider
$$\phi\in\{\text{Branched coverings with }\abs{\text{post-critical set}}=N \text{ satisfying (\ref{crit:KC}, \ref{item:KC1}) and (\ref{crit:KC}, \ref{item:KC2})}\},$$ then the shorter the length $\ell$ of the periodic cycle of the fully ramified point of $\phi$, the better an upper bound one can obtain on the degree over $\Q$ of the $k$-th dynamical degree of $R_{\phi}$. More informally:

\medskip
\noindent\textit{If $\phi$ resembles a topological polynomial, then the $k$-th dynamical degree of $R_{\phi}$ is an algebraic integer of low degree over $\Q$.}

\medskip

The sequence of dynamical degrees of any meromorphic map is log-concave \cite{DinhSibony2005}; the sequence of dynamical degrees of a holomorphic map on $\C\P^r$ is log-linear. There is an analysis, in \cite{Ramadas2016}, of how the $k$-th dynamical degree of $R_{\phi}$ depends on $k$. It is shown that the sequence $\{\text{$k$th dynamical degree of $R_{\phi}$}\}_k$ increases strictly with $k$, and that the sequence of logarithms dynamical degrees of $R_{\phi}$ is less concave if $\phi$ resembles a topological polynomial. The precise statement of the result in \cite{Ramadas2016} is very different from the statement of Theorem \ref{thm:Main}, and the proofs are unrelated as well. However, the two statements share the following informal interpretation (generalizing \cite{Koch2013}): 

\medskip

\noindent\textit{If $\phi$ resembles a topological polynomial, then the dynamics of $R_{\phi}$ resemble those of a holomorphic map on $\C\P^{\abs{P}-3}$.}

\medskip

It would be interesting to have a conceptual explanation for the relationship between Theorem \ref{thm:Main} and the results in \cite{Ramadas2016}.

\subsection{Hurwitz correspondences.}
Koch's results in \cite{Koch2013} are more general than described above. Let $\phi:S^2 \to S^2$ be a degree $d$ post-critically finite branched covering with post-critical set $\bP$. If $\phi$ does not satisfy criteria (\ref{crit:KC}, \ref{item:KC1}) and (\ref{crit:KC}, \ref{item:KC2}), then the meromorphic map $R_{\phi}$ need not exist. However, it is still true that the transcendental pullback map $\Th_{\phi}$ induced by $\phi$ on $\mathcal{T}({S^2,\bP})$ descends to an algebraic dynamical system on $\M_{0,\bP}$. However, in general, this is a multivalued map. More precisely, there is an algebraic variety $\H_{\phi}$ admitting a covering map from $\mathcal{T}({S^2,\bP})$ as well as two maps $\pi_1$ and $\pi_2$ to $\M_{0,\bP}$ such that $\pi_1$ is a covering map and the following diagram commutes  \cite{Koch2013}:

\begin{center}
  \begin{tikzpicture}
    \matrix(m)[matrix of math nodes,row sep=1em,column sep=8em,minimum
    width=2em] {
      \mathcal{T}(S^2,\mathbf{P})&&\mathcal{T}(S^2,\mathbf{P})\\
      &\H_{\phi}&\\
      \M_{0,\mathbf{P}}&&\M_{0,\mathbf{P}}\\
    };
    \path[-stealth] (m-1-1) edge node [above] {$\Th(\phi)$} (m-1-3);
    \path[-stealth] (m-1-1) edge (m-2-2);
    \path[-stealth] (m-1-1) edge node [left] {$\substack{\text{universal}\\\text{cover}}$} (m-3-1);
    \path[-stealth] (m-1-3) edge node [left] {$\substack{\text{universal}\\\text{cover}}$} (m-3-3);
    \path[-stealth] (m-2-2) edge node [above left]
    {$\pi_1$} (m-3-1);
    \path[-stealth] (m-2-2) edge node [above right] {$\pi_2$} (m-3-3);
  \end{tikzpicture}
\end{center}

The variety $\H_{\phi}$ is a \emph{Hurwitz space}, a moduli space parametrizing degree $d$ regular maps $f:(\CP^1,\bP)\to(\CP^1,\bP)$, with analogous branching to $\phi$. The Hurwitz space $\H_\phi$ is \textit{non-dynamical} in the sense that it parametrizes maps up to separate changes of coordinates on source and target $\CP^1$; this means that the behavior under iteration of $[f]\in\H_\phi$ is not well-defined. The multivalued map $\pi_2\circ\pi_1^{-1}$ is called a \emph{Hurwitz correspondence}, and considered to be an algebraic ``shadow" of $\Th_{\phi}$. Hurwitz correspondences can be defined purely algebro-geometrically, with no reference to branched coverings of the sphere and to Thurston's pullback map. (Section \ref{sec:HC}; see \cite{RamadasThesis} or \cite{Ramadas2015} for more details). In the case that  $\phi$ satisfies criteria (\ref{crit:KC}, \ref{item:KC1}) and (\ref{crit:KC}, \ref{item:KC2}), Koch showed that $\pi_2$ is generically one-to-one; the meromorphic map $R_{\phi}$ is $\pi_1\circ\pi_2^{-1}$, i.e. a single-valued but meromorphic ``inverse" of the multi-valued but holomorphic Hurwitz correspondence. Thus the graph of $R_{\phi}$ is (up to birational equivalence) the Hurwitz space $\H_{\phi}$.

\subsection{Combinatorial compactifications of moduli spaces and the proof of Theorem \ref{thm:Main}.}\label{sec:CombinatorialCompactifications} The Deligne-Mumford compactification $\Mbar_{0,\bP}$ of $\M_{0,\bP}$ is a moduli space of stable nodal genus zero curves with smooth distinct points marked by elements of $\bP$ (see Section \ref{sec:M0n} for definitions and details). The \emph{boundary} $\Mbar_{0,\bP}\setminus\M_{0,\bP}$ has a combinatorial stratification that is very useful: for example, this stratification is used to give explicit descriptions of the cohomology groups of $\Mbar_{0,\bP}$ \cite{Keel1992}. 

Hassett's \cite{Hassett2003} alternate weighted stable curves compactifications of $\M_{0,\bP}$ parametrize nodal genus zero curves with smooth points --- not necessarily distinct --- marked by elements of $\bP$. Let $\be:\bP\to\Q\cap (0,1]$ be an assignment, to every $p\in\bP$ of some rational `weight', such that the sum of the weights of all $p\in\bP$ is greater than $2$. Then there is a smooth projective compactification $\Mbar_{0,\bP}(\be)$ parametrizing nodal genus zero curves with smooth points marked by $\bP$; the marked points corresponding to a subset of $\bP$ may coincide as long as the sum of the weights of the points in that set is no greater than $1$. The \emph{boundary} $\Mbar_{0,\bP}(\be)\setminus\M_{0,\bP}$ has a combinatorial stratification that is related to the stratification of the boundary of the Deligne-Mumford compactification. Also, $\Mbar_{0,\bP}$ admits a regular birational map $\rho_{\be}$ to $\Mbar_{0,\bP}(\be)$, with fibers that may be positive dimensional over the boundary.

The Hurwitz space $\H_{\phi}$ admits an \emph{admissible covers} compactification $\Hbar_{\phi}$ constructed by Harris and Mumford \cite{HarrisMumford1982}; this compactification extends $\pi_1$ and $\pi_2$ to regular maps to $\Mbar_{0,\bP}$. The map $\pi_1:\Hbar_{\phi}\to\Mbar_{0,\bP}$ is finite and flat; this fact was used in \cite{Ramadas2015} to conclude algebraic stability of all Hurwitz correspondences on  $\Mbar_{0,\bP}$. The boundary of $\Hbar_{\phi}$ has a stratification analogous to and compatible with the stratification of $\Mbar_{0,\bP}$.

Now, suppose $\phi$ satisfies criteria (\ref{crit:KC}, \ref{item:KC1}) and (\ref{crit:KC}, \ref{item:KC2}), so as per the previous section, $\H_{\phi}$ is the graph of the meromorphic map $R_{\phi}$. We set $\bP_\infty\subseteq\bP$ to be the subset of points in the periodic cycle containing the fully ramified point $p_\infty$. We assign weight $\be(p)$ to $p\in\bP$ by the rule  $\be(p)=1$ for $p\in\bP_\infty$ (these are heavy points), and $\be(p)\ll1$ for $p\in(\bP\setminus\bP_\infty)$ (these points are vanishingly light). We formulate and apply a combinatorial analysis of the stratification of $\Hbar_{\phi}$ and of the fibers of $\rho_{\be}$ to show, roughly speaking, that positive-dimensional fibers of $\rho_{\be}\circ\pi_1$ are also positive-dimensional fibers of $\rho_{\be}\circ\pi_2$, and so the meromorphic map $R_{\phi}$ has finite fibers on $\Mbar_{0,\bP}(\be)$. When this analysis is applied to the induced map on cohomology, we obtain the algebraic stability result of Theorem \ref{thm:Main}. 

\subsection{Organization} We begin by introducing $\M_{0,\bP}$ and its compactifications in Section \ref{sec:M0n} and Hurwitz spaces in Section \ref{sec:HC}. In these background sections we frequently refer back to \cite{Ramadas2015}. Section \ref{sec:StaticPoly} contains the key technical contribution of this paper: here, we relate the combinatorics of compactifications of Hurwitz spaces with the combinatorics of certain Hassett spaces. In Section \ref{sec:Proof}, we bring all the ingredients together to prove Theorem \ref{thm:Main}.
\subsection{Conventions}
We work over $\C$.

\subsection*{Acknowledgements}
  I am grateful to my thesis advisors Sarah Koch and David Speyer; this work continues work done during my Ph.D.. I am also grateful to Rob Silversmith for useful conversations, including one that led to a more efficient proof of Lemma \ref{lem:unstabletounstable}, to Melody Chan for useful comments on an earlier draft, and to an anonymous referee for corrections and suggestions that led to significant improvements.

\section{The moduli space $\M_{0,\bP}$ and its compactifications}\label{sec:M0n}
Let $\bP$ be a finite set of cardinality at least $3$. There is a smooth quasiprojective variety $\M_{0,\bP}$ of complex dimension $(\dms)$ that parametrizes injections $\iota:\bP\into\CP^1$ up to the equivalence $\iota\sim\psi\circ\iota$ for any M\"obius transformation $\psi$. The variety $\M_{0,\bP}$ is not compact --- the limit of a one-parameter family of injections $\bP\into\C\P^1$ may irreparably fail to be an injection into $\CP^1$. There are a number of smooth projective compactifications of $\M_{0,\bP}$. The \emph{boundary} of a compactification $X$ is the complement $X\setminus\M_{0,\bP}$. If $X$ is a modular compactification of $\M_{0,\bP}$, i.e. one that extends its interpretation as a moduli space of maps from $\bP$ to an algebraic curve, then points on the boundary of $X$ must correspond to degenerate cases in which either the map is not injective, or the curve has singularities, or both.


\subsection{Stable curves and the Deligne-Mumford compactification $\Mbar_{0,\bP}$.}
The Deligne-Mumford compactification is in some sense the universal and largest modular compactification of $\M_{0,\bP}$: It admits a holomorphic birational map to every other modular compactification (Smyth, \cite{Smyth2009}). 
\begin{Def}
Let $\bP$ be a finite set. A \emph{$\bP$-marked nodal genus zero algebraic curve} is a connected, proper, possibly nodal algebraic curve $C$ of arithmetic genus zero, together with an injection $\iota$ from $\bP$ into the smooth locus of $C$. We say that $(C,\iota)$ is \emph{stable} if $C$ has no nontrivial automorphisms that commute with $\iota$. 
\end{Def}
Concretely, since $C$ has arithmetic genus zero, it is isomorphic to a tree of $\CP^1$s attached at nodes. A \emph{special point} on $C$ is a point of $C$ that is either a node, or in the image of $\iota$. The condition that $(C,\iota)$ have no non-trivial automorphism is equivalent to the condition that every irreducible component of $C$ have at least three special points. For the remainder of this section we suppose that $\bP$ is a finite set of cardinality at least $3$; by works of Deligne, Grothendieck, Knudsen, and Mumford, there is a smooth projective variety $\Mbar_{0,\bP}$ that parametrizes stable $\bP$-marked genus zero algebraic curves, and that contains $\M_{0,\bP}$ as a dense open subset. The boundary $\Mbar_{0,\bP}\setminus\M_{0,\bP}$ has codimension one. Points in the boundary correspond to injective maps from $\bP$ to a nodal algebraic curve; for a general point on the boundary this curve has two irreducible components. The homeomorphism class of a marked nodal curve is encoded combinatorially by a marked tree. For this reason, we introduce below some notation and terminology for describing marked trees and nodal curves. Note that every node on a genus zero curve is disconnecting, in fact, the complement of any node has exactly two connected components.

\begin{Def}\label{def:connect}
Let $(C,\iota)$ be a $\bP$-marked nodal genus zero curve. If $C_{\alpha}$ is an irreducible component of $C$, $x\in C\setminus C_{\alpha}$ and $\eta\in C\setminus\{x\}$ is a node, we say \emph{$\eta$ connects $C_{\alpha}$ to $x$} if $C_{\alpha}\setminus\{\eta\}$ and $x$ are in distinct connected components of $C\setminus\{\eta\}$. If $\eta$ connects $C_{\alpha}$ to $\iota(p)$ for $p\in\bP$, we say that $\eta$ connects $C_{\alpha}$ to $p$. Similarly, if $C_{\alpha}$ and $C_{\beta}$ are two irreducible components of $C$ and $\eta\in C$ is a node, we say \emph{$\eta$ connects $C_{\alpha}$ to $C_{\beta}$} if $C_{\alpha}\setminus\{\eta\}$ and $C_{\beta}\setminus\{\eta\}$ are in distinct connected components of $C\setminus\{\eta\}$. 
\end{Def}

If $x\not\in C_{\alpha}$ for some irreducible component $C_{\alpha}$ of $C$, then there is a unique node $\eta\in C_{\alpha}$ that connects $C_{\alpha}$ to $x$. Similarly, if $C_{\alpha}$ and $C_{\beta}$ are distinct irreducible components, then there is a unique node $\eta$ on $C_{\alpha}$ that connects $C_{\alpha}$ to $C_{\beta}$.

\begin{Def}
  A \emph{$\bP$-marked tree} is a `graph with legs' $\sigma$ defined as follows: $\sigma$ has vertices, edges joining pairs of vertices, and \emph{legs} marked by elements of $\bP$ that are attached to vertices, such that the resulting graph is connected and has no cycles. More formally, $\sigma$ carries the data of: a finite set $\Verts(\sigma)$ of vertices, a finite set $\Edges(\sigma)$ of edges, a map $\Edges(\sigma)\to\Sym^2(\Verts(\sigma))$ encoding the adjacency, a set of legs of $\sigma$ that is canonically identified with $\bP$, and a map $\mrk:\bP\to\Verts(\sigma)$ encoding how the legs are attached. For a vertex $v$ on $\sigma$, set of \emph{flags} on $v$ is defined as: $\Flags_v:=\{\mbox{Legs attached to
  $v$}\}\cup\{\mbox{edges incident to $v$}\}.$
The \emph{valence} of $v$, denoted $\abs{v}$ is defined to be the
cardinality of $\Flags_v.$ We define the \emph{moduli dimension}
$\md(v)$ of $v\in\Verts(\sigma)$ to be $\abs{v}-3$. We say that $\sigma$ is stable if every vertex on $\sigma$ has valence at least $3$, or, equivalently, if very vertex has non-negative moduli dimension. Suppose $\sigma$ is a $\bP$-marked tree, and $v$ is a vertex of $\sigma$. For $p\in\bP$, we define
$\delta(v\to p)$ to be the unique flag in $\Flags_v$ that connects the leg $p$ to $v$, i.e. is part of the unique (non-repeating) path in $\sigma$ from $v$ to $p$. If $\mrk(p)=v$ then $\delta(v\to p)=p;$ otherwise $\delta(v\to p)$ is an edge. Similarly, for $v_1$ and $v_2$ two distinct vertices of $\sigma$,  we define
$\delta(v_1\to v_2)$ to be the unique flag in $\Flags_{v_1}$ that is part of the path in
$\sigma$ from $v_1$ to $v_2$.  
 \end{Def}

\begin{Def}\label{Def:DualTree}
  Let $(C,\iota)$ be a $\bP$-marked nodal genus zero curve. Its
  \emph{dual tree} is the $\bP$-marked tree $\sigma$ defined as follows. The
  vertices $v$ of $\sigma$ correspond to irreducible components
  $C_v$ of $C$. Two vertices $v_1$ and $v_2$ are joined by an edge
  if and only if the components $C_{v_1}$ and $C_{v_2}$ intersect at a node. Thus nodes of $C$ correspond to edges of $\sigma$. For each marked point $\iota(p)$ on $C_v,$ we attach a
  leg marked by $p$ to the vertex $v$, i.e. $\mrk(p)=v$. The graph $\sigma$ has no loops because
  $C$ has arithmetic genus zero. Note that $\sigma$ is stable if and only is $(C,\iota)$ is.  
\end{Def}

For fixed $\bP$, there are finitely many isomorphism classes of stable
$\bP$-marked trees, and each of these arises as the dual tree of some $\bP$-marked stable genus zero curve. The classification of stable curves by topological type gives a
stratification of $\Mbar_{0,\bP}$. 

%
\begin{Def}\label{Def:BoundaryStratum}
  Given $\sigma$ a stable $\bP$-marked tree, the closure $S_\sigma$ of
  the locus $S^{\circ}_{\sigma}$ of curves with dual graph $\sigma$ is an irreducible subvariety of $\Mbar_{0,\bP}$ isomorphic to
 \begin{align}
\prod_{v\in\Verts(\sigma)}\Mbar_{0,\Flags_v} \label{eq:product2}
\end{align}
 We refer to $S_{\sigma}$ as a \emph{boundary stratum} of $\Mbar_{0,\bP}$; Boundary strata on $\Mbar_{0,\bP}$ are in bijection with isomorphism classes of stable $\bP$-marked trees.  
\end{Def}

From the above decomposition \ref{eq:product2} of $S_{\sigma}$ into a product we obtain that the dimension of a boundary stratum $S_{\sigma}$ is 
 \begin{align}
 \sum_{v\in\Verts(\sigma)}\dim_{\C}(\Mbar_{0,\Flags_v}) = \sum_{v\in\Verts(\sigma)}\md(v) \label{eq:dimensionofstratum}
\end{align}

\subsection{Stabilization and forgetful maps.} \label{sec:forget}
Suppose $\abs{\bP}\ge3$ and $(C, \iota)$ is a $\bP$-marked nodal genus zero curve. Then there is a unique curve $C'$, together with a surjective map $\st:C\to C'$, such that $(C', \st\circ\iota)$ is stable. The curve $C'$ is called the \emph{stabilization} of $C$, and is obtained from $C$ as follows. Let $\sigma$ be the dual tree of $C$. Given an irreducible component $C_v$ of $C$ corresponding to vertex $v$ of $\sigma$, we say that $C_v$ (resp. $v$) is \emph{$\bP$-stable} if there are at least three special points on $C_v$ of the form either a marked point $\iota(p)$ or  a node $\eta$ that connects $C_v$ to some marked point $p$. This is equivalent to the condition that there are at least three flags on $v$ of the form $\delta(v\to p)$ for some $p\in\bP$. We obtain $C'$ from $C$ by contracting to a point each connected component of the closure of $C\setminus \bigcup_{v \text{ $\bP$-stable}} C_v.$ The map $\st:C\to C'$ is the resulting map: a component $C_v$ maps isomorphically onto its image in $C'$ if and only if it is stable; otherwise $\st(C_v)$ is a point. 



Now, let $j:\bP'\into\bP$ be an injection of finite sets, where $\abs{\bP},\abs{\bP'}\ge 3$. There is a \emph{forgetful map} $\mu:\M_{0,\bP}\to\M_{0,\bP'}$ sending $[(\CP^1,\iota)]$ to $[(\CP^1,\iota\circ j)]$. If $(C, \iota)$ is a $\bP$-marked stable curve, then $(C, \iota\circ j)$ is not necessarily stable. However, we can obtain from $(C, \iota\circ j)$ a stable curve by stabilizing as described above. In this way, $\mu$ extends to a regular map from $\Mbar_{0,\bP}$ to $\Mbar_{0,\bP'}$. If $\sigma$ is a $\bP$-marked stable tree, forgetting the points in $\bP\setminus\bP'$ yields a $\bP'$-marked tree, in general not stable. We have:

\begin{lem}\label{lem:forget}
If $S_\sigma$ is a boundary stratum of $\Mbar_{0,\bP}$, then $\mu(S_\sigma)$ is a boundary stratum of $\Mbar_{0,\bP'}$, and the restriction of $\mu$ to $S_\sigma$ factors through the projection:
\begin{align}
S_{\sigma}=\prod_{v\in\Verts(\sigma)}\Mbar_{0,\Flags_v} \to \prod_{\substack{v\in\Verts(\sigma)\\\text{$\bP'$-stable}}}\Mbar_{0,\Flags_v}, \label{eq:forgetofstratum}
\end{align}
\end{lem}

\subsection{Hassett spaces/Moduli spaces of weighted stable curves.}\label{sec:WeightedStableCurves}

These are alternate compactifications of $\M_{0,\bP}$ constructed by Hassett in \cite{Hassett2003}. Points in the boundary of these compactifications parametrize possibly nodal curves $C$ that are marked by elements of $\bP$; but here the marked points are assigned rational weights that prescribe the extent to which they are allowed to coincide. 

\begin{Def}
  A \emph{weight datum} on $\M_{0,\bP}$ is a map  $\be:\bP\to\Q\cap(0,1]$
  such that $\sum_{p\in\bP} \be(p)>2.$
\end{Def}

\begin{Def}  Let $\be$ be a weight datum on $\M_{0,\bP}$. A \emph{$\bP$-marked $\be$-stable genus zero curve} is a possibly nodal curve $C$ of arithmetic genus zero, together with a (not necessarily injective) map $\mpt:\bP\to (\text{smooth locus of }C)$, such that  
\begin{enumerate}
  \item If $\mpt(p_{1})=\cdots=\mpt(p_{s}$) then
    $\be(p_1)+\cdots+\be(p_s)\le1$, and \label{item:wtle1}
  \item For every irreducible component $C_v$,
    \begin{align*}
      \#\{\mbox{nodes on $C_v$}\}+\sum_{\{p|\mpt(p)\in C_v\}}\be(p)>2. 
    \end{align*} \label{item:wtge1}
  \end{enumerate}
\end{Def}
Like a stable curve, $C$ is isomorphic to a tree of $\CP^1$s attached at nodes, and is marked by elements of $\bP$. Condition (\ref{item:wtle1}) specifies that marked points may coincide if their combined weights don't exceed one. Condition (\ref{item:wtge1}) ensures that any node on $C$ partitions the set $\bP$ into two sets, both of which have total weight greater than one. 

\begin{Defthm}[Hassett, \cite{Hassett2003}]\label{thm:Hassett}
  \begin{enumerate}
  \item Given a weight datum $\be$ on $\M_{0,\bP}$, there is a smooth projective
  variety $\Mbar_{0,\bP}(\be)$ that parametrizes $\bP$-marked $\be$-stable genus zero curves and
  contains $\M_{0,\bP}$ as a dense open set. 
  \item There is a
  \emph{reduction map}
  $\rho_{\be}:\Mbar_{0,\bP}\to\Mbar_{0,\bP}(\be)$ that
  respects the open inclusion of $\M_{0,\bP}$ into both spaces.
  \item If $\be_1$ and $\be_2$ are two weight data on $\M_{0,\bP}$ such that for every $p\in\bP$, $\be_1(p)\ge\be_2(p)$, then there is a \emph{generalized reduction map} $\rho_{\be_1, \be_2}:\Mbar_{0,\bP}(\be_1)\to\Mbar_{0,\bP}(\be_2)$ such that $\rho_{\be_2}=\rho_{\be_1, \be_2}\circ\rho_{\be_1}$.
    \end{enumerate}
\end{Defthm}

\begin{ex} \label{ex:HassettSpace}
\begin{enumerate}
\item Set $\be(p)=1$ for all $p\in\bP$; this is a weight datum as long as $\abs{\bP}\ge 3$. Then the notions of stability and $\be$-stability coincide, so $\Mbar_{0,\bP}\isom\Mbar_{0,\bP}(\be)$. Thus the Deligne-Mumford compactification is a special case of a Hassett space.
\item Fix $p_{\infty}\in\bP$, and $\epsilon\in\Q$ such that $(\abs{\bP}-2)<(1/\epsilon)$ but $(\abs{\bP}-1)>(1/\epsilon)$. Set $\be(p_{\infty})=1$, and $\be(p)=\epsilon$ for all $p\ne p_{\infty}$. Then $\Mbar_{0,\bP}(\be)\isom\CP^{\abs{\bP}-3}$. \label{it:ProjAsHassett}
\end{enumerate}
 \end{ex}

The reduction map $\rho_{\be}$ can be described explicitly: Suppose $(C, \iota)$ is a $\bP$-marked nodal genus zero curve. Then there is a unique curve $C'$, together with a surjective map $\st_{\be}:C\to C'$, such that $(C', \st_{\be}\circ\iota)$ is $\be$-stable. The curve $C'$ is called the \emph{$\be$-stabilization} of $C$, and is obtained from $C$ as follows. Let $\sigma$ be the dual tree of $C$. 

\begin{Def}\label{def:epsilonverystable}
Given an irreducible component $C_v$ of $C$ corresponding to vertex $v$ of $\sigma$, we say that $v$ (resp. $C_v$) is \emph{$\be$-stable} if:
 \begin{align}
    \sum_{\fl\in\Flags_v}\min\left\{1,\sum_{\{p|\fl=\delta(v\to
        p)\}}\be(p)\right\}>2.
  \end{align}
  \end{Def}
  
We obtain $C'$ by contracting to a point each connected component of the closure of
$(C\setminus \bigcup_{(v \text{ $\be$-stable)}} C_v)$. The induced stabilization map $\st_{\be}:C\to C'$ maps a component $C_v$ isomorphically onto its image in $C'$ if and only if $C_v$ is $\be$-stable; otherwise $\st_{\be}(C_v)$ is a point. The reduction map $\rho_{\be}:\Mbar_{0,\bP}\to\Mbar_{0,\bP}(\be)$ sends $[(C,\iota)]\in\Mbar_{0,\bP}$ to its $\be$-stabilization. Given a boundary stratum $S_{\sigma}$ of $\Mbar_{0,\bP}$, we consider the natural projection map
\begin{align}
S_{\sigma}=\prod_{v\in\Verts(\sigma)}\Mbar_{0,\Flags_v} \to \prod_{\substack{v\in\Verts(\sigma)\\\text{$\be$-stable}}}\Mbar_{0,\Flags_v}=:\PS_{\sigma}, \label{eq:projofstratum}
\end{align}
where $\PS_{\sigma}$ is defined to be the product on the right. We obtain from the above description the following lemma.

\begin{lem}\label{lem:discardunstable}
Let $S_{\sigma}\subset\Mbar_{0,\bP}$ be a boundary stratum. Then: 
\begin{enumerate}
\item the restriction $\rho_{\be}|_{S_{\sigma}}$ factors through the projection in equation (\ref{eq:projofstratum}),
\item the induced map from $\PS_{\sigma}$ to $\Mbar_{0,\bP}(\be)$ is birational onto its image, and
\item 
$\dim_{\C}(\rho_{\be}(S_{\sigma}))=\sum_{\substack{v\in\Verts(\sigma)\\\text{$\be$-stable}}}\md(v)$
\end{enumerate}
\end{lem}

\subsection{Heavy/light Hassett spaces.} In this paper we will be primarily concerned with a certain subclass of spaces of weighted stable curves. These spaces are called \emph{heavy/light} Hassett spaces and have appeared in studies of tropical moduli spaces of curves \cite{CHMRHeavyLight, KannanKarpLi2019}.
\begin{Def}
Suppose $\abs{\bP}\ge3$, and there is a decomposition $\bP=\bP_{\hvy}\sqcup\bP_{\lt}$ with $\abs{\bP_{\hvy}}\ge2$. Let $\epsilon>0$ be any rational number such that $\abs{\bP_{\lt}}<(1/\epsilon)$. Then the weight datum $\be$ sending $p\in\bP_{\hvy}$ to $1$ (these are the heavy points) and $p\in\bP_{\lt}$ to $\epsilon$ (these are the light points) is called a heavy/light weight datum, and the resulting moduli space $\Mbar_{0,\bP}(\be)$ is called a heavy/light Hassett space. 
\end{Def}

Heavy/light weight data $\be$ can be characterized in the following manner: on a $\be$-stable curve, heavy marked points may not coincide with each other or with light marked points, but light marked points may coincide with each other to an arbitrary extent. Thus the isomorphism class of the moduli space $\Mbar_{0,\bP}(\be)$ does not depend on the value of the rational number $\epsilon$; it depends only on the numbers of heavy and light points. If the number of light points is one or two, then the resulting heavy/light space is isomorphic to the Deligne-Mumford compactification. The number of heavy points must be at least $2$; if that number is exactly $2$, the resulting heavy/light space is a toric variety called a \emph{Losev-Manin} space and has been studied independently.

Since we will be interested in understanding the reduction maps from $\Mbar_{0,\bP}$ to various heavy/light Hassett spaces, the following characterization of $\be$-stableness for heavy/light data will be useful.

%


\begin{lem}\label{lem:HvyLtStable}
Suppose $\be$ is a heavy/light weight datum, with $\bP=\bP_{\hvy}\sqcup\bP_{\lt}$. Then
\begin{enumerate} 
\item (Statement about curves.) \label{it:HvyLtStableCurves}
\begin{enumerate}
\item An irreducible component $C_1$ of a nodal $\bP$-marked curve $(C, \iota)$ is not $\be$-stable if 
\begin{align}
\{p\in \bP_{\hvy} \text{ s.t. } \iota(p)\in C_1\}\cup\{\eta\in C_1 \text{ node connecting $C_1$ to some $p\in \bP_{\hvy}$} \}\label{eq:goodset1}
\end{align}
has cardinality one or less.\label{it:HvyLtStableCurves1}
\item An irreducible component $C_1$ of a stable $\bP$-marked curve $(C, \iota)$ is not $\be$-stable if and only if the set in (\ref{eq:goodset1}) has cardinality one.\label{it:HvyLtStableCurves2}
\end{enumerate}
\item (Equivalent statement about trees.) \label{it:HvyLtStableTrees}
\begin{enumerate}
\item A vertex $v$ of a $\bP$-marked tree $\sigma$ is not $\be$-stable if 
\begin{align}
\{\fl \in\Flags_v|\fl=\delta(v\to p) \text{ for some }p\in\bP_{\hvy}\}\label{eq:goodset2}
\end{align}
has cardinality one or less. \label{it:HvyLtStableTrees1}
\item A vertex $v$ of a stable $\bP$-marked tree $\sigma$ is not $\be$-stable if and only if the set in (\ref{eq:goodset2}) has cardinality one. \label{it:HvyLtStableTrees2}
\end{enumerate}
\end{enumerate}
\end{lem}
\begin{proof}
Since the equivalence of items (\ref{it:HvyLtStableCurves}) and (\ref{it:HvyLtStableTrees}) is clear, we prove only (\ref{it:HvyLtStableTrees}). First, we claim that for any vertex $v$ on a $\bP$-marked tree $\sigma$, the cardinality of set in (\ref{eq:goodset2}) must be at least one: Since there exists some $p_0\in \bP_{\hvy}$, and $\sigma$ is connected, $\exists \fl_0\in\Flags_v$ connecting $v$ to $p_0$. 

Now suppose $v$ is a vertex of $\sigma$ such that the set in (\ref{eq:goodset2}) has cardinality one. Then  
\begin{align}
    \sum_{\fl\in\Flags_v}\min\left\{1,\sum_{\{p|\fl=\delta(v\to
        p)\}}\be(p)\right\}=\sum_{\substack{\fl\in\Flags_v \\ \fl=\delta(v\to p) \\\text{ for some }\\ p\in\bP_{\hvy}}} 1+ \sum_{\substack{\fl\in\Flags_v \\ \fl\ne\delta(v\to p) \\\text{ for any }\\p\in\bP_{\hvy}}} \sum_{\substack{p \text{ s.t.} \\\fl=\delta(v\to p)}} \epsilon \le 1+ \abs{\bP_{\lt}}\epsilon <2.
  \end{align}
  So, according to Definition \ref{def:epsilonverystable}, $v$ is not $\be$-stable, proving part (\ref{it:HvyLtStableTrees1}). 
  
Finally, we suppose that $\sigma$ is a stable $\bP$-marked tree, and $v$ on $\sigma$. Since $\sigma$ is stable, $v$ is $\bP$-stable, so there are at least three flags on $v$ of the form $\delta(v\to p)$ for some $p\in\bP$. If there are two or more flags on $v$ for the form $\delta(v\to p)$ for some $p\in\bP_{\hvy}$, then 
\begin{align}
    \sum_{\fl\in\Flags_v}\min\left\{1,\sum_{\{p|\fl=\delta(v\to
        p)\}}\be(p)\right\}\ge 2+\epsilon>2,
\end{align}
so $v$ is $\be$-stable. So if $v$ is not $\be$-stable, then the set (\ref{eq:goodset2}) has cardinality exactly one, proving the lemma. 
\end{proof}

\subsection{A tower of compactifications}\label{sec:tower}

Let $\abs{\bP}\ge3$, and suppose $\epsilon>0$ is such that $(\abs{\bP}-2)<(1/\epsilon)$ and $(\abs{\bP}-1)>1/\epsilon$. Fix $p_{\infty}\in \bP$ and subsets of $\bP$

$$\{p_{\infty}\}=\bP_1\subset\bP_2\subset\bP_3,\cdots,\subset\bP_{\abs{\bP}}=\bP$$

such that $\abs{\bP_\ell}=\ell$. For $\ell=1,\ldots,\abs{\bP}$, let $X_\ell$ be the Hassett space corresponding to the weight datum assigning the points in $\bP_\ell$ weight $1$ and all other points weight $\epsilon$. For $\ell=2,\ldots,\abs{\bP}$, $X_\ell$ is a heavy/light Hassett space with $\ell$ heavy points and $(\abs{\bP}-\ell)$ light points. As stated in the previous section, $\Mbar_{0,\bP}\isom X_{\abs{\bP}}\isom X_{\abs{\bP}-1}\isom X_{\abs{\bP}-2}$, and $X_2$ is a Losev-Manin space. On the other hand, $X_1$ is not a heavy/light space: $X_1\isom\CP^{\abs{\bP}-3}$, as described in Example \ref{ex:HassettSpace}, (\ref{it:ProjAsHassett}). By Theorem \ref{thm:Hassett}, we have reduction maps $\rho_{\ell+1,\ell}:X_{\ell+1}\to X_{\ell}$ for $\ell=1,\ldots,\abs{\bP}-1$. These are the spaces and maps referred to in the statement of Theorem \ref{thm:Main}.

\subsection{(Co)homology groups of compactifications of $\M_{0,\bP}$.}

In this work, we only consider the Deligne-Mumford compactifications and the Hassett weighted stable curves compactification of $\M_{0,\bP}$. For any such compactification $X_{\bP}$ we have \cite{Keel1992, Ceyhan2009} that  $H_{2k}(X_{\bP},\Z)$ is a finitely generated free abelian group generated by fundamental classes of boundary strata. We also have identifications $H^{2k}(X_\bP, \Z)=H_{2(\dim(X_{\bP})-k)}(X_\bP, \Z)$ and $H^{2k}(X_\bP, \R)=H^{k,k}(X_{\bP})$. A boundary stratum in $\Mbar_{0,\bP}(\be)$ is the image, under $\rho_{\be}$, of a boundary stratum in $\Mbar_{0,\bP}$. This tells us that $H_{2k}(\Mbar_{0,\bP}(\be),\Z)$ is the quotient of $H_{2k}(\Mbar_{0,\bP},\Z)$ by the kernel of the pushforward $(\rho_{\be})_*$. By \cite{Ramadas2015} (Lemma 10.9), the kernel of $(\rho_{\be})_*:H_{2k}(\Mbar_{0,\bP}(\be),\Q)\to H_{2k}(\Mbar_{0,\bP},\Q)$ is generated by fundamental classes of boundary strata. Lemma \ref{lem:discardunstable} allows us to describe $\ker((\rho_{\be})_*)$ as follows:
\begin{lem}\label{lem:CriterionForKernel}
\begin{enumerate}
\item The pushforward $(\rho_{\be})_*([S_\sigma])$ is nonzero if and only if
  every vertex $v\in\Verts(\sigma)$ with positive moduli dimension is
  $\be$-stable.
\item $\ker((\rho_{\be})_*)=\Span\{[S_\sigma] \text{ $k$-dim }| \quad\exists v\in\Verts(\sigma) \text{ not $\be$-stable with }\md(v)>0\}.$
\end{enumerate}
\end{lem}

\subsection*{Change of notation} In the subsequent sections, for a $\bP$-marked curve $(C, \iota)$ or $(C, \mpt)$, we will suppress the notation  $\iota$/$\mpt$ for the marking map, and just write $(C,\bP)$. 

\section{Hurwitz correspondences}\label{sec:HC}

Hurwitz spaces are moduli spaces parametrizing finite maps with
prescribed ramification between smooth curves. We refer the reader to 
\cite{RomagnyWewers2006} for a general summary and to \cite{Ramadas2015} for the definitions as used in this paper. In particular, we use Definition 5.4 of \cite{Ramadas2015} for our definition of the \emph{Hurwitz space}: Fix discrete data: $\bA$ and $\bB$ finite sets, $d\in\Z^{>0}$, $F:\bA\to \bB$, $\br:\bB\to\{\mbox{partitions of $d$}\}$, and $\rm:\bA\to\Z^{>0}$. Then $\cH=\cH(\bA,\bB,d,F,\br,\rm)$ is a smooth quasiprojective variety parametrizing morphisms $f:(C,\bA)\to (D,\bB)$, where $C$ and $D$ are, respectively, $\bA$-marked and $\bB$-marked smooth connected genus zero curves, $f$ is degree $d$, and maps the points in $\bA$ to those in $\bB$ as specified by $F$, with ramification at points in $\bA$ and branching over points in $\bB$ as specified by $\rm$ and $\br$ respectively. The Hurwitz space $\cH$ has a ``source curve'' map $\pi_{\bA}$ to $\M_{0,\bA}$ sending $[f:(C,\bA)\to (D,\bB)]$ to the marked curve $[(C,\bA)]$. There is
similarly a ``target curve'' map $\pi_{\bB}$ from $\H$ to
$\M_{0,\bB}$. Unless $\cH$ is empty, $\pi_{\bB}$ is a finite covering
map. Thus the triple $(\cH,\pi_{\bB},\pi_{\bA}):\M_{\bB}\multi
\M_{\bA}$ is a multi-valued map. We generalize this notion.
\begin{Def}[\emph{Hurwitz correspondence}, \cite{Ramadas2015},
  Definition 5.5]
  With notation as above, let $\bA'$ be any subset of $\bA$ with cardinality at least 3. There
  is a forgetful map $\mu:\M_{0,\bA}\to\M_{0,\bA'}$. Let $\Gamma$
  be a union of connected components of $\cH$. We call the triple $(\Gamma, \pi_{\bB},\mu\circ\pi_{\bA}):\M_{0,\bB}\multi \M_{0,\bA'}$ a \emph{Hurwitz correspondence}.
\end{Def}

\subsection{Hurwitz correspondences and meromorphic maps from PCF maps}\label{sec:HurwitzfromPCF}
Suppose $\phi:S^2\to S^2$ is a degree $d$ orientation-preserving branched covering with finite post-critical set $\bP$. Define $\rm:\bP\to\Z^{>0}$ sending $p\in\bP$ to the local degree of $\phi$ at $p$. Define $\br:\bP\to\{\text{partitions of $d$}\}$ sending $p\in\bP$ to the branching profile of $\phi$ over $p$.  Then $\H=\H(\bP,\bP,d,\phi|_{\bP},\br,\rm)$ parametrizes regular maps $(\C\P^1,\bP)\to(\C\P^1,\bP)$ with the same branching as $\phi$. Let $\pi_1$ and $\pi_2$ be the ``target" and ``source" maps from $\H$ to $\M_{0,\bP}$. There is a unique connected component $\H_{\phi}$ of $\H$ parametrizing maps that are topologically isomorphic to $\phi$, i.e. maps $f:(\C\P^1,\bP)\to(\C\P^1,\bP)$ such that there exist marked-point-preserving homeomorphisms $\chi_1$ and $\chi_2$ from $(\C\P^1,\bP)$ to $(S^2, \bP)$ with $\chi_2\circ f= \phi\circ\chi_1$. By \cite{Koch2013}, the Hurwitz correspondence $(\H_{\phi},\pi_1,\pi_2)$ on $\M_{0,\bP}$ is descended from the Thurston pullback map $\Th_{\phi}$. When, in addition, $\phi$ satisfies criteria \ref{crit:KC}, \ref{item:KC1} and \ref{crit:KC}, \ref{item:KC2}, Koch showed that $\pi_2:\H_{\phi}\to\M_{0,\bP}$ is generically one-to-one; the meromorphic map $R_{\phi}$ is $\pi_2\circ\pi_1^{-1}$. Thus the graph of $R_{\phi}$ is (up to birational equivalence) the Hurwitz space $\H_{\phi}$, i.e. the following diagram commutes:

\begin{center}
\begin{tikzcd}[row sep=20pt, column sep=20pt]
  &\H_\phi\arrow[dl,"\pi_1",swap]\arrow[dr,shift left=3pt,"\pi_2"]&\\
  \M_{0,\mathbf{P}}&&\M_{0,\mathbf{P}}\arrow[ll,dashed,"R_\phi"]\arrow[ul,dashed,shift
  left=3pt,"\pi_2^{-1}"]
\end{tikzcd}
\end{center}

As described in \cite{DinhSibony2008, Truong2016, Ramadas2015}, correspondences can be composed and dynamical correspondences such as $(\H_{\phi},\pi_1,\pi_2)$ can be iterated. When the meromorphic map $R_{\phi}$ exists, then the multivalued map $(\H_{\phi}^n,\pi_{1,n},\pi_{2,n})$ given by the $n$-th iterate of $(\H_{\phi},\pi_1,\pi_2)$ is the inverse of $R_{\phi}^n$.

\subsection{The maps on (co)homology induced by Hurwitz correspondences}\label{sec:Hurwitzcohomology}

Suppose $(\Gamma, \pi_{\bB},\mu\circ\pi_{\bA}):\M_{0,\bB}\multi \M_{0,\bA'}$ is a Hurwitz correspondence from $\M_{0,\bB}$ to $\M_{0,\bA'}$, $X_{\bB}$ is some smooth projective compactification of $\M_{0,\bB}$ and $X_{\bA'}$ is some smooth projective compactification of $\M_{0,\bA'}$. Then there are induced pushforward maps on homology groups, (and, dually, pullback maps on cohomology groups) as follows. Let $\bar{\Gamma}$ be any smooth projective compactification of $\Gamma$ such that the maps $\pi_{\bB}:\bar{\Gamma}\to X_{\bB}$ and $(\mu\circ\pi_{\bA}):\bar{\Gamma}\to X_{\bA'}$ are both regular. Then $[\Gamma]_*:=(\mu\circ\pi_{\bA})_*\circ\pi_{\bB}^*:H_{2k}(X_{\bB}, \Z)\to H_{2k}(X_{\bA'}, \Z)$. The pushforward and pullback maps are well-defined, i.e. they do not depend on the choice of compactification $\bar{\Gamma}$, but they are not in general functorial with respect to composition of correspondences. (See \cite{DinhSibony2008, Ramadas2015} for details). However, the maps induced by Hurwitz correspondences on the (co)homology groups of the Deligne-Mumford compactifications in particular are functorial with respect to composition \cite{Ramadas2015}. Now suppose $\phi:S^2\to S^2$ is PCF and satisfies criteria (\ref{crit:KC}, \ref{item:KC1}) and  (\ref{crit:KC}, \ref{item:KC2}) so $R_{\phi}$ exists. By the definition of pullback by a meromorphic map given in \cite{Roeder2013}, and the fact that $\H_{\phi}$ is the graph of $R_{\phi}$, we have, for any compactification $X_{\bP}$ of $\M_{0,\bP}$, and $\forall n>0$, that $[\H_{\phi}^n]_*=(R_{\phi}^n)^*$ and $[\H_{\phi}^n]^*=(R_{\phi}^n)$ on $H_{2k}(X_{\bP},\Z)$. This implies that by \cite{Ramadas2015}, $R_{\phi}$ is algebraically stable on $\Mbar_{0,\bP}$.

\subsection{Compactifications of Hurwitz spaces}\label{sec:adm}

An \emph{admissible cover} is a ramified map between nodal curves that satisfies a certain balancing condition at nodes. Harris and Mumford \cite{HarrisMumford1982} defined admissible covers and constructed their moduli spaces, which compactify Hurwitz spaces. We refer the reader to Definition 7.3 of \cite{Ramadas2015} for the definition of a \emph{$(\bA,\bB,d,F,\br,\rm)$-admissible cover}, as it is used here. In general, the admissible covers compactifications are only coarse moduli spaces with orbifold singularities. For technical ease, \cite{Ramadas2015}, Definition 7.1 introduces \emph{fully marked Hurwitz spaces}, a class of Hurwitz spaces parametrizing maps of curves $f:(C,\bA)\to (D,\bB)$ with the property that $\forall a\in C$ with $f(a)\in\bB$, we must have $a\in \bA$. In other words, a point on the source curve is marked if (and only if) its image on the target curve is marked. The admissible covers compactifications of fully marked Hurwitz spaces are fine moduli spaces. 

\begin{thm}[Harris and Mumford, \cite{HarrisMumford1982}]
  Given $\H=(\bA,\bB,d,F,\br,\rm)$ a fully marked Hurwitz space as in
  \cite{Ramadas2015}, Definition 7.1, there is a projective variety
  $\Hbar=\Hbar(\bA,\bB,d,F,\br,\rm)$ parametrizing admissible covers, and containing
  $\H=\H(\bA,\bB,d,F,\br,\rm)$ as a dense open subset. The
  compactification $\Hbar$ extends the maps
  $\pi_\bB$ and $\pi_\bA$ to maps $\bar{\pi_{\bB}}$ and
  $\bar{\pi_{\bA}}$ to $\Mbar_{0,\bB}$ and $\Mbar_{0,\bA}$,
  respectively, with $\bar{\pi_\bB}:\Hbar\to\Mbar_{0,\bB}$ a finite
  flat map. $\Hbar$ may not be normal, but its normalization is
  smooth.
\end{thm}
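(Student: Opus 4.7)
The plan is to construct $\Hbar$ moduli-theoretically as the space of $(\bA,\bB,d,F,\br,\rm)$-admissible covers, prove representability and properness by classical arguments, and then extract the statements about $\bar{\pi_\bB}$ and the normalization from an explicit deformation-theoretic local model at a node.

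For the construction, I would define a family of admissible covers over a base $T$ to be a flat proper family $\cC\to T$ of nodal $\bA$-marked genus zero curves, a family $\cD \to T$ of $\bB$-marked stable genus zero curves, and a finite $T$-morphism $\cC\to\cD$ that is admissible on every geometric fiber in the sense of the definitions above. This functor is representable by a locally closed subscheme of the relative $\Hom$-scheme between the universal curves pulled back from $\Mbar_{0,\bA}\times\Mbar_{0,\bB}$, which is quasi-projective. Properness is the valuative criterion: given a smooth cover $f_t:C_t\to D_t$ over a punctured disk, apply stable reduction to $D_t$ to obtain a nodal limit $D_0$, then extend $f_t$ to an admissible cover $f_0:C_0\to D_0$ by using the monodromy of $f_t$ around each vanishing cycle of $D_t$ to prescribe the ramification indices at the new nodes on $C_0$; separatedness uses the absence of continuous automorphisms of admissible covers respecting $\bA$.

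The technical heart is the local model at a node. \'Etale locally on $\Mbar_{0,\bB}$ near a point parametrizing $D$ with node $\tilde\eta$, the universal smoothing is $\Spec\C[x,y,t]/(xy-t)$ with $t$ a coordinate on the base. A node $\eta\in C$ above $\tilde\eta$ with ramification index $r$ on both sides is locally $\Spec\C[u,v]/(uv)$ with $f$ acting by $x=u^r,\, y=v^r$. Simultaneous smoothing of source and target forces $(uv)^r = xy = t$, so the local ring of $\Hbar$ at $[f]$ is (\'etale locally) $\C[u,v,t]/((uv)^r - t)$. This ring has $r$ analytic branches, one for each choice of $r$-th root of unity $\zeta$ giving $uv = \zeta s$ with $s^r = t$; each branch is smooth, yielding smoothness of the normalization, while the fact that the branches meet at the central point shows $\Hbar$ is non-normal when $r\ge 2$. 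On each branch the induced map to $\C[x,y,t]/(xy-t)$ sends $t\mapsto s^r$ and is finite and flat of degree $r$, giving the finite flatness of $\bar{\pi_\bB}$. Finiteness of fibers of $\bar{\pi_\bB}$ is combinatorial: over a boundary point $[D,\bB]$ with dual tree $\tau$, an admissible cover is specified by a finite Hurwitz count on each component of $D$ together with a finite gluing choice at each node.

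The main obstacle is the rigorous existence of $\Hbar$ as a projective scheme, which in \cite{HarrisMumford1982} is carried out via a careful GIT or Hilbert-scheme argument; once projectivity is granted, the structural claims about $\bar{\pi_\bB}$ and about the normalization follow essentially formally from the \'etale-local computation above.
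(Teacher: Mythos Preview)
The paper does not prove this theorem; it is quoted from \cite{HarrisMumford1982} and used as a black box. So there is no proof in the paper to compare your proposal against. Your outline is essentially the classical Harris--Mumford argument, and the overall strategy (moduli-theoretic construction, valuative criterion, \'etale-local model at nodes) is the right one.

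That said, your local computation contains an error worth flagging. The ring $\C[u,v,t]/((uv)^r - t)$ you write down is isomorphic to $\C[u,v]$ (eliminate $t$), hence smooth and normal; it does not have $r$ analytic branches. The coordinates $u,v$ live on the total space of the universal source curve, not on $\Hbar$. The correct base-direction picture is: smoothing the target node gives a parameter $t$ on $\Mbar_{0,\bB}$, and each source node $\eta_i$ above it (with ramification $r_i$) contributes a smoothing parameter $s_i$ subject to $s_i^{r_i}=t$. So \'etale-locally
\[
\Hbar \;\cong\; (\text{smooth factor}) \times \Spec\,\C[s_1,\ldots,s_k]\big/\bigl(s_i^{r_i}-s_j^{r_j}\ :\ i,j\bigr),
\]
and the map $\bar\pi_\bB$ sends this to $\Spec\,\C[t]$ by $t\mapsto s_1^{r_1}$. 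This is finite flat, and the normalization is a disjoint union of smooth $1$-parameter families indexed by tuples of roots of unity, which is where smoothness of the normalization comes from. Non-normality therefore arises when there are \emph{several} source nodes over a single target node (with nontrivial ramification), not from a single ramified node as your write-up suggests.
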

\begin{rem}
  The irreducible components of $\Hbar$ are the Zariski closures of the connected
  components of $\H.$
\end{rem}

\subsection{Boundary strata in $\Hbar$}\label{sec:StratificationOfAdmissibleCovers}
Moduli spaces of admissible covers have a stratification analogous to
and compatible with that of $\Mbar_{0,n}$. This stratification has been studied in detail in \cite{CavalieriMarkwigRanganathan2016}. In this section we fix $\H=\H(\bA,\bB,d,F,\br,\rm)$ a fully marked Hurwitz space, and let
$\Hbar=\Hbar(\bA,\bB,d,F,\br,\rm)$. If $[f:(C,\bA)\to (D,\bB)]\in\Hbar$ is an admissible cover, then there is an induced map of graphs from the dual tree of $C$ to that of $D$, as described in \cite{Caporaso2014}. The \emph{combinatorial type} of an admissible cover records this map of graphs together with other discrete data that describe how the irreducible components of $C$ map to those of $D$. We refer the reader to \cite{CavalieriMarkwigRanganathan2016} for the general definition of combinatorial type of admissible cover, and to Definition 7.8 of \cite{Ramadas2015} for the specific definition, used here, of the combinatorial type $\gamma$ of $[f:(C,\bA)\to (D,\bB)]\in\Hbar$, where:

\begin{align} \label{eq:combinatorialtype}
  \gamma=(\sigma,\tau,d_{\Verts},f_{\Verts},F_{\Edges},(\br_v)_{v\in\Verts(\sigma)},\rm_{\Edges}).
\end{align}

Here, $\sigma$ and $\tau$ denote the dual trees of $C$ and $D$ respectively, $f_{\Verts}$ and $F_{\Edges}$ record, respectively, how the irreducible components and the nodes of $C$ map to those of $D$, $d_{\Verts}$ records the degrees of the restrictions of $f$ to the irreducible components of $C$, and $(\br_v)_{v\in\Verts(\sigma)}$ and $\rm_{\Edges}$ record, respectively, the branching of $f$ over nodes of $D$ and at nodes of $C$.

\begin{Def}
  The closure $G_\gamma$ of $\{f':C'\to D'|\mbox{$f'$ has
    combinatorial type $\gamma$}\}$ is a subvariety of $\Hbar$. We
  call such a subvariety a \emph{boundary stratum} of $\Hbar$.
\end{Def}

The boundary stratum $G_\gamma$ in $\Hbar$ can be decomposed into a
product of lower-dimensional spaces of admissible covers. For
$v\in\Verts(\sigma),$ the Hurwitz space $\H(\Flags_v,\Flags_{f_{\Verts}(v)},d^\vert(v),F_{\Edges},\br_v,\rm_{\Edges})$ is fully marked. Denote by $\Hbar_v$ the corresponding space of admissible covers; this is reducible in general. The space
$\Hbar_v$ admits maps to the moduli space $\Mbar_{0,\Flags_v}$ of
source curves and the moduli space $\Mbar_{0,\Flags_{f_{\Verts}(v)}}$
of target curves. For $w\in\Verts(\tau),$ set $\Hbar_w:=\prod_{v\in(f_{\Verts})^{-1}(w)}\Hbar_v$, where the product is fibered over the common moduli space $\Mbar_{0,\Flags_w}$ of target curves. The fibered product $\Hbar_w$ is itself a moduli space of possibly
disconnected admissible covers, admitting a map
$\bar\pi_w^{\mathrm{source}}$ to the moduli space
$\prod_{v\in(f_{\Verts})^{-1}(w)}\Mbar_{0,\Flags_v}$ of source curves
and a finite flat map $\bar\pi_w^{\mathrm{target}}$ to the moduli
space $\Mbar_{0,\Flags_w}$ of target curves. The stratum $G_\gamma$ is
isomorphic to $\prod_{w\in\Verts(\tau)}\Hbar_w.$ The boundary stratum $G_\gamma$ is not necessarily irreducible. Its irreducible components are of the form 
\begin{align}\label{eq:BdryCompAsProd}
J=\prod_{w\in\Verts(\tau)}\bar{\cJ_w} 
\end{align}
 where
$\bar{\cJ_w}$ is an irreducible component of $\Hbar_w.$

\section{Moduli spaces of static polynomials}\label{sec:StaticPoly}

A polynomial $f[z]$ in one variable defines a regular map $f:\CP^1\to\CP^1$ for which $\infty\in \C\P^1$ is a fully ramified fixed point. More generally, a regular map $f':\CP^1\to\CP^1$ is called a \emph{polynomial} if there is a fully ramified fixed point $a_\infty\in \C\P^1$; such a map $f'$ is conjugate to a regular map defined by a polynomial in one variable. We recall from Section \ref{sec:Intro} that a topological polynomial is a branched covering $\phi:S^2 \to S^2$ that has a fully ramified fixed point. The condition of having a fully ramified point is invariant under separate changes of coordinates on source and target, i.e. it is a \emph{non-dynamical/static} feature. On the other hand, the condition of being a fixed point is invariant under the same change of coordinates on source and target, but not invariant under separate changes of coordinates on source and target. In other words, the condition of being a fixed point is a \emph{dynamical} feature. Now, suppose $\phi$ satisfies criteria (\ref{crit:KC}, \ref{item:KC1}) and (\ref{crit:KC}, \ref{item:KC2}). Then, although $\phi$ may not be a topological polynomial, it shares a non-dynamical feature with topological polynomials, i.e. there is a point $p_\infty$ that is a fully ramified point of $\phi$, although it may not be fixed. This motivates the following definition:

\begin{Def}
We say that a regular map $f:\CP^1\to\CP^1$ of degree $d$ is a \emph{static polynomial} if it has a fully ramified point, i.e. if exists $a_{\infty}\in\CP^1$ such that the local degree of $f$ at $a_\infty$ is $d$. Similarly, we say that a degree $d$ admissible cover $f:C\to D$ is a static polynomial if there exists a smooth point $a_\infty\in C$ such that the local degree of $f$ at $a_\infty$ is $d$. 
\end{Def}

Note that if $\phi$ is PCF and satisfies criteria (\ref{crit:KC}, \ref{item:KC1}) and (\ref{crit:KC}, \ref{item:KC2}), then $\H_{\phi}$ and $\Hbar_{\phi}$ are moduli spaces that parametrize static polynomials. 

\subsection{Degenerations of static polynomials}

Here, we describe a few basic features of the combinatorics of static polynomial admissible covers.  

\begin{lem}\label{lem:connectedpiece}
Suppose $f:C\to D$ is a degree $d$ admissible cover with an irreducible component $C_\infty$ of $C$ such that the restriction $f|_{C_\infty}$ has full degree equal to $d$. If $\theta\in D$ is any node, and $D_0$ is the connected component of $D\setminus\{\theta\}$ that contains $f(C_\infty)\setminus\{\theta\}$, then $f^{-1}(D_0)$ is connected. 
\end{lem}
\begin{proof}
Set $\bar{D_0}$ be the closure of $D_0$ in $D$, and set $\bar{C_0}$ to be the closure of $f^{-1}(D_0)$ in $C$. Then the restriction $f|_{\bar{C_0}}:\bar{C_0}\to \bar{D_0}$ is also an admissible cover of degree $d$, and it has full degree equal to $d$ on the irreducible component $C_\infty$ of $\bar{C_{0}}$. Thus the source curve $\bar{C_0}$ must be connected. Since $\bar{C_0}\setminus f^{-1}(D_0)$ is a set of isolated smooth points (these are nodes of $C$ but smooth points of $\bar{C_0}$), connectedness of $\bar{C_0}$ is equivalent of to connectedness of $ f^{-1}(D_0)$; we conclude that the latter is connected, as desired. 
\end{proof}

\begin{cor}\label{cor:staticpolyconnectedpiece}
Suppose $f:C\to D$ is a degree $d$ static polynomial admissible cover, fully ramified over a smooth point $b_\infty\in D$. If $\theta\in D$ is any node, and $D_0$ is the connected component of $D\setminus\{\theta\}$ that contains $b_\infty$, then $f^{-1}(D_0)$ is connected. 
\end{cor}
\begin{proof}
This follows from Lemma \ref{lem:connectedpiece} and the fact that if $C_\infty$ is the irreducible component containing the fully ramified smooth point $a_\infty=f^{-1}(b_\infty)$, then the restriction $f|_{C_\infty}$ has full degree equal to $d$.
\end{proof}

We further conclude that if $f:C\to D$ is a degenerate static polynomial as in Corollary \ref{cor:staticpolyconnectedpiece}, then the restriction of $f$ to any irreducible component of $C$ is a static polynomial of possibly smaller degree. More precisely, let $C_1$ be an irreducible component of $C$. If $a_\infty\in C_1$, then it's clear that $f|_{C_1}$ is a static polynomial. Otherwise, let $\eta$ be the node on $C_1$ connecting it to $a_\infty$; we claim that $\eta$ is a fully ramified point of $f|_{C_1}$. To see this, set $\theta=f(\eta)$, and $D_0$ to be the connected component of $D\setminus\{\theta\}$ that contains $b_\infty$. Since by Corollary \ref{cor:staticpolyconnectedpiece}, $f^{-1}(D_0)$ is connected, and since $C$ has genus zero, there is a unique node connecting $C_1$ to $f^{-1}(D_0)$; this node must be $\eta$. Thus $\eta$ is the only point of $C_1$ mapping to $\theta$; this forces $f|_{C_1}$ to be fully ramified at $\eta$.



\subsection{Static polynomials and weighted stable curves}

In this section we study Hurwitz spaces $\H$ parametrizing static polynomials. We will relate the combinatorics of static polynomials to the combinatorics of stable curves to find compactifications $X_{\bB}$ and $X_{\bA}$ on which the Hurwitz correspondence induced by $\H$ behaves well. 

\begin{Def}\label{def:CompatiblePair}
Let $\H=\H(\bA,\bB,d,F,\br,\rm)$ be a Hurwitz space parametrizing static polynomials, and let $b_{\infty}\in\bB$ be the image of the fully ramified point, i.e. we have $\br(b)=(d)$. We define a \emph{compatible pair of heavy/light Hassett spaces} with respect to $\H$ to be a pair $X_{\bB}$ and $X_{\bA}$ of compactifications of $\M_{0,\bB}$ and $\M_{0,\bA}$ respectively, obtained as follows. Let $\bB=\bB_{\hvy}\sqcup\bB_{\lt}$ be a set partition such that: $b_\infty\in\bB_{\hvy}$, $\abs{\bB_{\hvy}}\ge2$, and $\abs{F^{-1}(\bB_{\hvy})}\ge2$. Set $\bA_{\hvy}=F^{-1}(\bB_{\hvy})$, and $\bA_{\lt}=F^{-1}(\bB_{\lt})$. Let $\epsilon>0$ be such that $\abs{\bB_{\lt}}<(1/\epsilon)$ and $\abs{\bA_{\lt}}<(1/\epsilon)$. Let $\be_{\bB}$ be the weight datum on $\M_{0,\bB}$ that assigns points in $\bB_{\hvy}$ weight $1$ and points in $\bB_{\lt}$ weight $\epsilon$, and let $\be_{\bA}$ be the weight datum on $\M_{0,\bA}$ that assigns points in $\bA_{\hvy}$ weight $1$ and points in $\bA_{\lt}$ weight $\epsilon$. Set $X_{\bB}$ and $X_{\bA}$ to be the corresponding spaces of $\bB$- and $\bA$-marked weighted stable curves respectively. 

In other words, we require the special point $b_\infty$ (the marked image of the fully ramified point) to be heavy, we require all of the points in $\bA$ that map, under $F$, to heavy points in $\bB$ to be heavy themselves, and we require points in $\bA$ that map to light points in $\bB$ to be light. 
\end{Def}

Now, we fix $\H$, together with a pair of compatible pair $X_{\bB}$ and $X_{\bA}$ of heavy/light Hassett spaces, along with all the notation in Definition \ref{def:CompatiblePair}. Let $\rho_{\bB}$ and $\rho_{\bA}$ be the reduction morphisms from $\Mbar_{0,\bB}$ and $\Mbar_{0,\bA}$ to $X_{\bB}$ and $X_{\bA}$ respectively. We are interested in studying the correspondence induced by $\H$ from $X_{\bB}$ to $X_{\bA}$. In order to be able to use an admissible covers compactification, we pass to the fully marked Hurwitz space: Let $\H^{\full}=\H(\bA^{\full},\bB,d,F,\br,\rm)$ be the fully marked Hurwitz space of $\H$ as in Section \ref{sec:adm}, with $\bA^{\full}\supseteq\bA$ the full marked preimage of $\bB$. Let $\Hbar^{\full}$ be the admissible covers compactification of $\H^\full$; it admits a covering map $\nu$ to $\H$. Set $\bar{\pi_{\bB}}$ and $\bar{\pi_{\bA^{\full}}}$ to the maps from $\Hbar^{\full}$ to $\Mbar_{0,\bB}$ and $\Mbar_{0,\bA^{\full}}$ respectively, and $\mu:\Mbar_{0,\bA^{\full}}\to\Mbar_{0,\bA}$ to be the forgetful map. Throughout the section we will refer back to the notation defined above:
\begin{align}
\begin{array}{ccccccccccccc}\label{eq:AllData}
\bA, & \bA_{\hvy}, & \bA_{\lt}, & \bB, & \bB_{\hvy},& \bB_{\lt}, & \bA^{\full}, & a_\infty, & b_\infty, & d, &F, & \br, & \rm, \\
  \epsilon,&  \be_{\bA}, & \be_{\bB}, & \H, & \Hbar^{\full}, & X_{\bB}, & X_{\bA}, & \bar{\pi_{\bB}}, & \bar{\pi_{\bA^{\full}}},& \rho_{\bB}, & \rho_{\bA}, &\mu& \text{and }\thickspace \nu
\end{array}
\end{align}

\begin{lem}\label{lem:unstabletounstable}
With notation as in (\ref{eq:AllData}), suppose we have $[f:(C,\bA^{\full})\to (D,\bB)]\in\Hbar^\full$. Then, considering $C$ as a (not necessarily stable) $\bA$-marked curve, we have:
\begin{enumerate} 
\item (Statement about the map of curves.) If $C_1$ is an irreducible component of $C$ such that $f(C_1)$ is not $\be_\bB$-stable, then $C_1$ is not $\be_\bA$-stable. \label{it:unstabletounstablecomps}
\item (Equivalent statement about the induced map of dual trees.) If $v$ is a vertex on the dual tree of $C$ such that $f_{\Verts}(v)$ on the dual tree of $D$ is not $\be_\bB$-stable, then $v$ is not $\be_{\bA}$-stable.\label{it:unstabletounstableverts}
\end{enumerate}
\end{lem}
\begin{proof}
Since the equivalence of items (\ref{it:unstabletounstablecomps}) and (\ref{it:unstabletounstableverts}) is clear, we prove only (\ref{it:unstabletounstablecomps}). Since $(D,\bB)$ is a stable curve and $f(C_1)$ is not $\be_\bB$-stable, we conclude from part (\ref{it:HvyLtStableCurves2}) of Lemma \ref{lem:HvyLtStable} that there is a unique node $\theta$ on $f(C_1)$ connecting it to every heavy point, i.e. to every point in $\bB_{\hvy}$. Since $b_\infty$ is heavy, $\theta$ connects $f(C_1)$ to $b_\infty$. Now, let $D_0$ be the connected component of $D\setminus\{\theta\}$ that contains $b_\infty$ (and contains every other point in $\bB_{\hvy}$, and does not contain $f(C_1)\setminus\{\theta\}$). By Corollary \ref{cor:staticpolyconnectedpiece}, $C_0:=f^{-1}(D_0)$ is connected. Since the pair $\be_\bA$ and $\be_\bB$ is a compatible pair of weights as in Definition \ref{def:CompatiblePair}, every point in $\bA_{\hvy}$ maps, via, $f$, to some point in $\bB_{\hvy}$, and thus every point in $\bA_{\hvy}$ lies on $C_0$. Now, since $C$ is genus zero, there is a unique note $\eta$ on $C_1$ connecting it to $C_0$, i.e. $\eta$ is the node on $C_1$ that connects it to every point in $\bA_{\hvy}$. By the criterion in part (\ref{it:HvyLtStableCurves1}) of Lemma \ref{lem:HvyLtStable}, we conclude that $C_1$ is not $\be_\bA$-stable.
\end{proof}

\begin{lem}\label{lem:TwoMapsSameProjection}
With notation as in (\ref{eq:AllData}), suppose $G_{\gamma}$ is any boundary stratum of $\Hbar^\full$ and that $J$ is any irreducible component of $G_{\gamma}$. Then the two maps $(\rho_{\bB}\circ\bar{\pi_{\bB}})$ and $(\rho_{\bA}\circ\mu\circ\bar{\pi_{\bA^\full}})$ from $J$ to $X_{\bB}$ and to $X_{\bA}$ respectively both factor through the projection
\begin{align}\label{eq:AdmStratumProjection}
J=\prod_{w\in\Verts(\tau)}\bar{\mathcal{J}_w}\to\prod_{\substack{w\in\Verts(\tau)\\\be_{\bB}\text{-stable}}}\bar{\mathcal{J}_w},
\end{align}
where the decomposition of $J$ as a product is as per Section \ref{sec:StratificationOfAdmissibleCovers}, equation (\ref{eq:BdryCompAsProd}).
\end{lem}

\begin{proof}
Recall that $\tau$ is the dual tree of the target curve $(D,\bB)$ of a generic admissible cover $[f:(C,\bA^\full)\to (D,\bB)]\in J\subset\Hbar^{\full}$. As described in Section \ref{sec:StratificationOfAdmissibleCovers}, the above decomposition of $J$ into a product is induced by the decomposition $G_\gamma=\prod_{w\in\Verts(\tau)}\Hbar_w,$ where
  $\Hbar_w$ is an admissible covers space of (pure) dimension $\md(w)$. The factor $\bar{\mathcal{J}_w}$ in the decomposition of $J$ is an irreducible component of
  $\Hbar_w$, and thus also has dimension $\md(w)$. Under $\bar{\pi_\bB}$, $J$ maps to the boundary stratum $T_\tau$ in $\Mbar_{0,\bB}$, and the restriction $\bar{\pi_{\bB}}:J\to  T_\tau$ decomposes into a product as follows:

\begin{center}
  \begin{tikzpicture}
    \matrix(m)[matrix of math nodes,row sep=3em,column sep=14em,minimum
    width=2em] {
    J=  \prod_{w\in\Verts(\tau)}\bar{\mathcal{J}}_w&\prod_{w\in\Verts(\tau)}\Mbar_{0,\Flags_w}=T_\tau\\
    };
    \path[-stealth] (m-1-1) edge node [above] {$\bar{\pi_\bB}|_{J}=\prod_{w\in\Verts(\tau)}\bar\pi_w^{\mathrm{target}}$} (m-1-2);
  \end{tikzpicture}
\end{center}
Each factor map $\bar\pi_w^{\mathrm{target}}$ is a finite flat map from the admissible covers space $\bar{\mathcal{J}}_w$ to an appropriate moduli space of target curves. Thus $\bar{\pi_{\bB}}:J\to  T_\tau$ is finite and flat, so $T_\tau$ is the full image of $J$. Now, by Lemma \ref{lem:discardunstable}, the restriction $\rho_\bB:T_\tau \to X_{\bB}$ factors through the projection  
\begin{align}
T_{\tau}=\prod_{w\in\Verts(\tau)}\Mbar_{0,\Flags_w} \to \prod_{\substack{w\in\Verts(\tau)\\\text{$\be_\bB$-stable}}}\Mbar_{0,\Flags_w} \label{eq:TargetStratumProjection}
\end{align}
We conclude that the restriction $\bar{\pi_{\bB}}:J\to X_{\bB}$ factors through the projection in (\ref{eq:AdmStratumProjection}).

Now, the boundary stratum $S_\sigma$ in $\Mbar_{0,\bA^{\full}}$ is isomorphic to
$\prod_{v\in\Verts(\sigma)}\Mbar_{0,\Flags_v},$ and the restriction $\bar{\pi_{\bA^{\full}}}:J\to S_{\sigma}$ also factors as a product:
\begin{center}
  \begin{tikzpicture}
    \matrix(m)[matrix of math nodes,row sep=2em,column sep=15em,minimum
    width=2em] {
     J= \prod\limits_{w\in\Verts(\tau)}\bar{\mathcal{J}}_w&\prod\limits_{w\in\Verts(\tau)}\prod\limits_{v\in
        f_{\Verts}^{-1}(w)}\Mbar_{0,\Flags_v}=S_{\sigma}\\
    }; \path[-stealth] (m-1-1) edge node [above] {$\bar{\pi_\bA^{\full}}|_{J}=\prod_{w\in\Verts(\tau)}\bar\pi_w^{\mathrm{source}}$} (m-1-2);
  \end{tikzpicture}
\end{center}
Each factor map $\bar\pi_w^{\mathrm{source}}$ is a map from the space $\bar{\mathcal{J}}_w$ of admissible covers to a moduli space of possibly disconnected source curves. Note that every vertex on $\sigma$ that is $\be_\bA$-stable is also $\bA$-stable, thus by Lemmas \ref{lem:forget} and \ref{lem:discardunstable}, the restriction $\rho_\bA\circ\mu:S_\sigma \to X_{\bA}$ factors through the composition of projections  
\begin{align}
S_{\sigma}=\prod_{v\in\Verts(\sigma)}\Mbar_{0,\Flags_v} \xrightarrow{}{} \prod_{\substack{v\in\Verts(\sigma)\\\text{$\bA$-stable}}}\Mbar_{0,\Flags_v} \xrightarrow{}{} \prod_{\substack{v\in\Verts(\sigma)\\\text{$\be_\bA$-stable}}}\Mbar_{0,\Flags_v}
\end{align}

From item (\ref{it:unstabletounstableverts}) of Lemma \ref{lem:unstabletounstable}, we conclude that if $v$ is a $\be_{\bA}$-stable vertex on $\sigma$, then $f_{\Verts}(v)$ is $\be_\bB$-stable as a vertex on $\tau$. Thus the restriction $\rho_\bA\circ\mu:S_\sigma \to X_{\bA}$ factors through the projection  

%

\begin{align}
S_{\sigma}=\prod_{v\in\Verts(\sigma)}\Mbar_{0,\Flags_v} \to \prod_{\substack{v\in\Verts(\sigma)\\\text{$f_{\Verts}(v)$ $\be_\bB$-stable}}}\Mbar_{0,\Flags_v}=\prod_{\substack{w\in\Verts(\tau)\\\text{$\be_\bB$-stable}}}\prod\limits_{\thickspace \thickspace v\in
        f_{\Verts}^{-1}(w)}\Mbar_{0,\Flags_v}
\end{align}
Thus the composite $\rho_{\bA}\circ\mu\circ\bar{\pi_\bA^{\full}}:J\to X_{\bA}$ factors though the projection in (\ref{eq:AdmStratumProjection}), proving the lemma.
\end{proof}

Using Lemma \ref{lem:TwoMapsSameProjection}, we conclude that any irreducible component $J$ of the boundary of $\Hbar^\full$ that is contracted in dimension by the map to $X_{\bB}$ must also be contracted in dimension by the map to $X_{\bA}$.

\begin{lem}\label{lem:TwoMapsContractDimension}
With notation in (\ref{eq:AllData}), suppose $G_{\gamma}$ is a boundary stratum of $\Hbar^\full$ and $J$ is some irreducible component of $G_{\gamma}$ such that $\dim_{\C}(\rho_{\bB}\circ\bar{\pi_{\bB}}(J))<\dim_{\C}(J)$. Then $\dim_{\C}(\rho_{\bA}\circ\mu\circ\bar{\pi_{\bA^\full}}(J))<\dim_\C(J)$.
\end{lem}

\begin{proof}
The map $\bar{\pi_{\bB}}$ is finite, so $\dim_\C(\bar{\pi_{\bB}}(J))=\dim_\C(J)$. In fact, $\bar{\pi_{\bB}}(J)$ is the boundary stratum $T_\tau$ of $\Mbar_{0,\bB}$. We conclude that $\dim_\C(\rho_\bB(T_\tau))<\dim_\C(T_\tau)$. By Lemma \ref{lem:TwoMapsSameProjection}, $\rho_{\bA}\circ\mu\circ\bar{\pi_\bA^{\full}}:J\to X_{\bA}$ factors though the projection in \ref{eq:AdmStratumProjection}. We have that $\dim_\C(J)=\sum_{w\in\Verts(\tau)}\md(w)$. By the above, 
$\dim_{\C}(\rho_{\bA}\circ\mu\circ\bar{\pi_{\bA^\full}}(J))\le \sum_{\substack{w\in\Verts(\tau)\\\be_{\bB}\text{-stable}}}\md(w).$ 
By Lemma \ref{lem:CriterionForKernel}, $\tau$ has at least one vertex with positive moduli dimension that is not $\be_{\bB}$-stable. Thus $\sum_{\substack{w\in\Verts(\tau)\\\be_{\bB}\text{-stable}}}\md(w)<\sum_{w\in\Verts(\tau)}\md(w)$,
proving the lemma.
\end{proof}

\begin{rem}\label{rem:posdimfibers}
Since $\bar{\pi_{\bB}}$ is a finite map, any positive-dimensional fiber of $\rho_\bB\circ\bar{\pi_{\bB}}$ is the pre-image, under $\bar{\pi_{\bB}}$, of a positive-dimensional fiber of $\rho_{\bB}$. In turn, any positive-dimensional fiber of $\rho_{\bB}$ is a union of fibers of projections from boundary strata in $\Mbar_{0,\bB}$ onto factors corresponding to $\be_{\bB}$-stable vertices of their dual trees. Thus, the pre-image, under $\bar{\pi_{\bB}}$, of such a positive-dimensional fiber of $\rho_{\bB}$ is a union of fibers of projections from boundary strata in $\Hbar$ onto factors corresponding to $\be_{\bB}$-stable vertices of their target dual trees. By Lemma \ref{lem:TwoMapsContractDimension}, $\rho_\bA\circ\mu\circ\bar{\pi_{\bA^{\full}}}$ contracts to a point each such fiber. We conclude that any connected component of some positive-dimensional fiber of $\rho_\bB\circ\bar{\pi_{\bB}}$ maps to a single point under $\rho_\bA\circ\mu\circ\bar{\pi_{\bA^{\full}}}$. As a consequence, the image of $\Hbar^\full$ in the product $X_{\bB}\times X_{\bA}$ is finite over $X_{\bB}$. This implies that the correspondence induced by $\Hbar$ from $X_{\bB}$ to $X_{\bA}$ is regular.
\end{rem}

\begin{prop}\label{prop:KerneltoKernel}
With notation in (\ref{eq:AllData}), let $\Gamma$ be any non-empty union of connected components of $\H$. Then, for $k=0,\ldots,\dim_{\C}(\Mbar_{0,\bB})$, $[\Gamma]_*:H_{2k}(\Mbar_{0,\bB})\to H_{2k}(\Mbar_{0,\bA})$ takes $\ker((\rho_{\bB})_*)$ to $\ker((\rho_{\bA})_*)$.
\end{prop}

\begin{rem}
The very beginning of the proof of Proposition \ref{prop:KerneltoKernel} follows the beginning the proof of Theorem 9.7 of \cite{Ramadas2015}: In order to understand the pushforward by a Hurwitz correspondence on the homology groups of the Deligne-Mumford compactifications we reduce to the case of a fully marked Hurwitz space, then frame the action of the pushforward on boundary strata in terms of the stratification of the space of admissible covers. After this point, the two proofs diverge. Here, the key content lies in Lemma \ref{lem:TwoMapsSameProjection}, via Lemma \ref{lem:TwoMapsContractDimension}.
\end{rem}

\begin{proof}
First, we reformulate the problem in order to allow ourselves to work solely with fully marked Hurwitz spaces. Set $\Gamma^{\full}=\nu^{-1}(\Gamma)$ and set $\bar{\Gamma^{\full}}$ to be its closure in $\Hbar^{\full}$. Fix $k\in\{0,\ldots,\dim_{\C}(\Mbar_{0,\bB})\}$. By Lemma 7.2 of \cite{Ramadas2015}, we have $[\Gamma]_*=(1/\deg{\nu}) [\Gamma^{\full}]_*$. Thus it suffices to show that $[\Gamma^\full]_*=\mu_*\circ((\bar{\pi_\bA^{\full}})|_{\bar{\Gamma^\full}})_*\circ(\bar{\pi_\bB})|_{\bar{\Gamma^\full}}^{*}$ sends $\ker((\rho_{\bB})_*)$ to $\ker((\rho_{\bA})_*)$. Suppose $[T_\tau]\in\ker((\rho_{\bB})_*)$ is an arbitrary $k$-dimensional boundary stratum in the kernel of $(\rho_{\bB})_*$. By Lemma \ref{lem:CriterionForKernel}, $\tau$ has at least one vertex with positive moduli dimension that is not $\be_{\bB}$-stable. Also, $\dim_\C(\rho_{\bB}(T_\tau))<\dim_\C(T_{\tau})$. Since the map $\bar{\pi_{\bB}}$ is flat, by Lemma 1.7.1 of \cite{Fulton1998} we have that
 $$(\bar{\pi_{\bB}})|_{\bar{\Gamma^\full}}^*([T_{\tau}])=\sum_{\substack{J \text{ irred. comp}\\\text{of }(\bar{\pi_{\bB}})|_{\bar{\Gamma^\full}}^{-1}(T_{\tau})}} m_J [J],$$
where the $(m_J)$s are positive integer multiplicities. Let $J$ be an arbitrary term appearing in the above sum; since $\bar{\pi_{\bB}}$ is finite, $J$ is an irreducible component of some $k$-dimensional boundary stratum $G_{\gamma}$ of $\Hbar^{\full}$. We have that 
$$\dim_\C(\rho_{\bB}\circ\bar{\pi_{\bB}}(J))=\dim_\C(\rho_{\bB}(T_\tau))<\dim_\C(T_{\tau})=\dim_\C(J).$$
 By Lemma \ref{lem:TwoMapsContractDimension}, $\dim(\rho_{\bA}\circ\mu\circ(\bar{\pi_\bA^{\full}})|_{\bar{\Gamma^\full}}(J) < \dim(J),$ 
so $(\rho_{\bA}\circ\mu\circ(\bar{\pi_\bA^{\full}})|_{\bar{\Gamma^\full}})_*([J])=0\in H_{2k}(X_{\bA})$. Since $J$ was arbitrary, we conclude:
\begin{align*}
(\rho_{\bA})_*([\Gamma^\full]_*([T_\tau]))=&(\rho_{\bA})_*\circ\mu_*\circ((\bar{\pi_\bA^{\full}})|_{\bar{\Gamma^\full}})_*\circ\bar{\pi_\bB}|_{\bar{\Gamma^\full}}^{*}([T_{\tau}])\\
=&\sum_{\substack{J \text{ irred. comp}\\\text{of }(\bar{\pi_{\bB}})|_{\bar{\Gamma^\full}}^{-1}(T_{\tau})}} m_J (\rho_{\bA}\circ\mu\circ(\bar{\pi_\bA^{\full}})|_{\bar{\Gamma^\full}})_*[J]=0\in H_{2k}(X_{\bA}).
\end{align*}

 Thus $[\Gamma^\full]_*([T_\tau])\in\ker((\rho_{\bA})_*)$. We conclude that for an arbitrary boundary stratum $T_{\tau}$ of $\Mbar_{0,\bB}$ in the kernel of $(\rho_{\bB})_*$, its pushforward $[\Gamma^\full]_*([T_{\tau}])$ is in the kernel of $(\rho_{\bA})_*$. By \cite{Ramadas2015}, Lemma 10.9, $\ker((\rho_{\bB})_*)$ is generated by the fundamental classes of boundary strata. We conclude that $[\Gamma^\full]_*$ sends $\ker((\rho_{\bB})_*)$ to $\ker((\rho_{\bA})_*)$, proving the proposition.
\end{proof}

\section{Algebraic stability of $\H_{\phi}$ on heavy/light Hassett spaces}\label{sec:Proof}

In this section, we fix a degree $d$ branched covering $\phi$ with finite post-critical set $\bP$ that satisfies criteria (\ref{crit:KC}, \ref{item:KC1}) and (\ref{crit:KC}, \ref{item:KC2}). Let $p_\infty\in\bP$ be the fully ramified cyclic point, and let $\bP_{\infty}\subset\bP$ be the forward orbit of $p_\infty$ i.e. the set of all points in its periodic cycle. Fix a positive rational number $\epsilon$ such that $\frac{1}{\abs{\bP}-1}<\epsilon<\frac{1}{\abs{\bP}-2}$ and let $\be$ be the weight datum that assigns weight $1$ to the elements of $\bP_{\infty}$ and assigns weight $\epsilon$ to all elements not in $\bP_{\infty}$. Let $X=\Mbar_{0,\bP}(\be)$ be the corresponding Hassett space. If $\abs{\bP_{\infty}}=1$ then $p_\infty$ is fixed so $\phi$ is a topological polynomial. Also, as described in Section \ref{sec:tower}, the Hassett space $X$ is isomorphic to $\CP^{\abs{\bP}-3}$. By \cite{Koch2013}, $R_{\phi}$ is holomorphic thus algebraically stable on $X$. If $\abs{\bP_{\infty}}\ge 2$ then $X$ is a heavy/light Hassett space. Here, we show:

\begin{thm}\label{thm:algebraicstability}
Let $\phi,\bP,p_\infty, \bP_{\infty}, \epsilon, \be$, and $X$ be as above, and suppose $\abs{\bP_{\infty}}\ge2$. Then $\H_{\phi}$ and $R_{\phi}$ are algebraically stable on $X$.
\end{thm}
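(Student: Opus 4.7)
The plan is to apply Proposition \ref{prop:KerneltoKernel} to $\H_\phi$ and then descend the algebraic stability of $R_\phi$ on $\Mbar_{0,\bP}$ (from \cite{Ramadas2015}) to $X$. Since $(\rho_\be)_*: H_{2k}(\Mbar_{0,\bP},\Q)\onto H_{2k}(X,\Q)$ is surjective, the operator $[\H_\phi^n]_* = (R_\phi^n)^*$ on $H_{2k}(X)$ can be realized as the descent of its counterpart on $H_{2k}(\Mbar_{0,\bP})$, provided the latter preserves $\ker((\rho_\be)_*)$. Granting this preservation for $n=1$, it will hold for all $n$ via the identity $[\H_\phi^n]_* = ([\H_\phi]_*)^n$ on $\Mbar_{0,\bP}$ (algebraic stability on $\Mbar_{0,\bP}$), and this same identity then descends to $X$, which is exactly algebraic stability on $X$.

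To verify the hypotheses of Proposition \ref{prop:KerneltoKernel}, I take $\bB=\bA=\bP$, $\Gamma = \H_\phi$, $\bB_1 = \bP_\infty$, and $b_\infty = \phi(p_\infty)\in\bP_\infty$. Because $p_\infty$ is fully ramified for $\phi$, the preimage $\phi^{-1}(\phi(p_\infty))$ is the single point $\{p_\infty\}$ with local degree $d$, so $\br(\phi(p_\infty))=(d)$ and hence $\H_\phi$ parametrizes static polynomials. The hypothesis $\abs{\bB_1} = \ell \ge 2$ is the standing assumption. The source heavy set is $\bA_1 := F^{-1}(\bB_1) = \phi|_{\bP}^{-1}(\bP_\infty)$, which contains $\bP_\infty$ (since $\phi(\bP_\infty) = \bP_\infty$), so $\abs{\bA_1}\ge 2$. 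The constraint $\epsilon < 1/(\abs{\bP}-2)$ in the theorem ensures $\abs{\bB_2}\epsilon < 1$ and $\abs{\bA_2}\epsilon < 1$.

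Proposition \ref{prop:KerneltoKernel} then gives that $[\H_\phi]_*$ maps $\ker((\rho_\be)_*)$ into $\ker((\rho_{\be'})_*)$, where $\be'$ is the heavy/light weight datum on $\M_{0,\bP}$ with heavy set $\bA_1$. Because $\bA_1 \supseteq \bP_\infty$, we have $\be'\ge\be$ pointwise, so by Theorem \ref{thm:Hassett}(3) there is a reduction $\rho_{\be',\be}: \Mbar_{0,\bP}(\be')\to X$ with $\rho_\be = \rho_{\be',\be}\circ\rho_{\be'}$. Passing to pushforwards yields $\ker((\rho_{\be'})_*)\subseteq\ker((\rho_\be)_*)$, and combining the two inclusions gives the desired self-preservation $[\H_\phi]_*(\ker((\rho_\be)_*))\subseteq\ker((\rho_\be)_*)$. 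The main subtlety I expect is verifying that the descent of $[\H_\phi^n]_*^{\Mbar_{0,\bP}}$ coincides with $[\H_\phi^n]_*^{X}$; this should follow from the general principle that $R_\phi^n$ extends uniquely to a meromorphic self-map of $X$ whose graph closure in $X\times X$ is the image under $\rho_\be\times\rho_\be$ of the graph closure in $\Mbar_{0,\bP}\times\Mbar_{0,\bP}$, together with a routine projection-formula computation giving $[\H_\phi^n]_*^X=(\rho_\be)_*\circ[\H_\phi^n]_*^{\Mbar_{0,\bP}}\circ\rho_\be^*$.
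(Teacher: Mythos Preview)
Your proposal is correct and follows essentially the same route as the paper: verify the hypotheses of Proposition~\ref{prop:KerneltoKernel} for $\H_\phi$ with $\bB_1=\bP_\infty$, conclude that $[\H_\phi]_*$ preserves $\ker((\rho_\be)_*)$, and then descend the algebraic stability of $\H_\phi$ on $\Mbar_{0,\bP}$ (known from \cite{Ramadas2015}) to $X$.

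Two remarks on execution. First, you are more careful than the paper in one place: you observe that $\bA_1=\phi|_\bP^{-1}(\bP_\infty)$ may strictly contain $\bP_\infty$, so Proposition~\ref{prop:KerneltoKernel} only gives $[\H_\phi]_*(\ker(\rho_\be)_*)\subseteq\ker((\rho_{\be'})_*)$ for a possibly different heavy/light datum $\be'$; your use of $\be'\ge\be$ and Theorem~\ref{thm:Hassett}(3) to get $\ker((\rho_{\be'})_*)\subseteq\ker((\rho_\be)_*)$ is exactly right and fills a point the paper leaves implicit. Second, your descent step is only sketched: the paper handles this by directly invoking \cite{Ramadas2015}, Lemma~4.16, which says that algebraic stability on $X_1$ together with invariance of $\ker(r_*)$ under $[\Gamma]_*$ implies algebraic stability on $X_2$. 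Your proposed formula $[\H_\phi^n]_*^X=(\rho_\be)_*\circ[\H_\phi^n]_*^{\Mbar_{0,\bP}}\circ\rho_\be^*$ is the content of that lemma, but its justification requires a little care (one uses that $(\rho_\be)_*\rho_\be^*=\id$ and that the kernel is invariant; the statement about graph closures is not needed and is in fact more delicate than the linear-algebraic argument). Citing the lemma is the cleanest way to close this.
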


\begin{proof}
By \cite{Ramadas2015}, The Hurwitz correspondence $\H_{\phi}$ is algebraically stable on $\Mbar_{0,\bP}$. Now, let $\rho:\Mbar_{0,\bP}\to X$ be the reduction map. Note that $(X,X)$ is a compatible pair of heavy/light Hassett spaces with respect to the Hurwitz space $\H_{\phi}$, as in Definition \ref{def:CompatiblePair}. Thus $\H_{\phi}$, $X$ and $\rho$ together satisfy the assumptions of Proposition \ref{prop:KerneltoKernel}. We conclude that for all $k$, the kernel of $\rho_*:H_{2k}(\Mbar_{0,\bP})\to H_{2k}(X)$ is an invariant subspace of $[\H_{\phi}]_*$. On the other hand, it is shown in \cite{Ramadas2015} Lemma 4.16 that if a correspondence $\Gamma$ is algebraically stable on $X_1$, and if $r:X_1\to X_2$ is a regular birational map such that for all $m$, the kernel of the pushforward $r_*:H_{m}(X_1)\to H_{m}(X_2)$ is invariant under the action of $[\Gamma]_*$ on $H_m(X_1)$, then $\Gamma$ is also algebraically stable on $X_2$. Applying this result here tells us that $\H_{\phi}$ is algebraically stable on $X$, i.e. for every iterate $n$, $[\H_{\phi}^n]_*=[\H_\phi]_*^n$ on $H_{2k}(X)$. On the other hand, for all $k=0,\ldots,\dim_\C(X)$, and for iterates $n>0$, the action of $(R_{\phi}^n)^*$ on $H^{2k}(X)=H^{k,k}(X)$ is identified with the action of $[\H_{\phi}^n]_*$ on $H_{2(\dim_\C(X)-k)}(X)$. We conclude that $R_{\phi}$ is algebraically stable on $X$. 
\end{proof}

Theorem \ref{thm:Main} follows as an immediate consequence: it is a restatement of Theorem \ref{thm:algebraicstability} above.

\begin{rem}
There is a variant of Theorem \ref{thm:algebraicstability} obtained by applying Proposition \ref{prop:KerneltoKernel} repeatedly. Suppose $\phi$ is a degree $d$ branched covering with finite post-critical set $\bP$ that satisfies criterion (\ref{crit:KC}, \ref{item:KC1}) with $p_\infty$ the fully ramified periodic point, and such that every critical point of $\phi$ is periodic. Then $\phi$ satifies (\ref{crit:KC}, \ref{item:KC2}) as well. Let $\bP_{\infty}$ and $\epsilon$ be as in the statement of Theorem \ref{thm:algebraicstability}. Let $\bP_{\infty}= \bP_1\subset\bP_2\ldots,\bP_r=\bP$ be any filtration of $\bP$ such that each $\bP_i$ is a union of periodic cycles of $\phi$. For $i=1,\ldots, r$, let $\be_i$ be the weight datum on $\M_{0,\bP}$ assigning weight $1$ to points in $\bP_i$ and weight $\epsilon$ to points in the complement of $\bP_i$, and let $X_i$ be the corresponding space of weighted stable curves. Note that there is a generalized reduction map $\rho_{i,i-1}:X_i\to X_{i-1}$ commuting with the reduction maps $\rho_i$ and $\rho_{i-1}$ from $X_r=\Mbar_{0,\bP}$ to $X_i$ and $X_{i-1}$ respectively. For $i=1,\ldots, r-1$, set $V_i,k=\ker((\rho_i)_*)\subset H_{2k}(\Mbar_{0,\bP})$. By Proposition \ref{prop:KerneltoKernel}:
\begin{enumerate}
\item The subspace $V_{i,k}$ is invariant under the action $[\H_{\phi}]_*=R_{\phi}^*$ on $H_{2k}(\Mbar_{0,\bP})=H^{2(\dim(\Mbar_{0,\bP})-k)}(\Mbar_{0,\bP})$. Thus by Lemma 4.16 of \cite{Ramadas2015}:
\item The Hurwitz correspondence $H_{\phi}$ and the rational map $R_{\phi}$ are algebraically stable on each $X_i$. 
\end{enumerate}
Thus $V_{r-1,k}\subset V_{r-2,k}\subset\ldots\subset H_{2k}(\Mbar_{0,\bP})=H^{2(\dim(\Mbar_{0,\bP})-k)}(\Mbar_{0,\bP})$ is a filtration of $H^{2(\dim(\Mbar_{0,\bP})-k)}(\Mbar_{0,\bP})$ by $R_{\phi}^*$-invariant subspaces. This filtration allows us to write $R_{\phi}^*$ as a block-lower-triangular matrix. In contrast, the main result of \cite{Ramadas2015} is a completely different filtration of the (co)homology groups of $\Mbar_{0,\bP}$ by subspaces invariant for every Hurwitz correspondence, giving us in this specific context another --- different and utterly independent --- expression of $R_{\phi}^*$ as a block-lower-triangular matrix. The latter expression makes no use of the specifics of $\phi$, in particular of criteria (\ref{crit:KC}, \ref{item:KC1}) and (\ref{crit:KC}, \ref{item:KC2}).

\end{rem}
\bibliographystyle{amsalpha}
\bibliography{../../HurwitzRefs}
\end{document}